\providecommand{\U}[1]{\protect\rule{.1in}{.1in}}
\newtheorem{theorem}{Theorem}[section]
\newtheorem{proposition}{Proposition}[section]
\newtheorem{corollary}[theorem]{Corollary}
\newtheorem{definition}[theorem]{Definition}
\newtheorem{remark}[theorem]{Remark}
\newenvironment{proof}[1][Proof]{\textbf{#1.} }{\ \rule{0.5em}{0.5em}}
\begin{document}

\title{A priori estimates for solutions to anisotropic elliptic \\problems via symmetrization}
\author{A. Alberico\thanks{Istituto per le Applicazioni del Calcolo \textquotedblleft
M. Picone\textquotedblright, Sez. Napoli, C.N.R., Via P. Castellino 111 -
80131 Napoli, Italy. E--mail: a.alberico@na.iac.cnr.it} - G. di
Blasio\thanks{Dipartimento di Matematica e Fisica, Seconda Universit\`{a}
degli Studi di Napoli, Via Vivaldi, 43 - 81100 Caserta, Italia. E--mail:
giuseppina.diblasio@unina2.it} -- F. Feo\thanks{Dipartimento di Ingegneria,
Universit\`{a} degli Studi di Napoli \textquotedblleft
Pathenope\textquotedblright, Centro Direzionale Isola C4 80143 Napoli, Italia.
E--mail: filomena.feo@uniparthenope.it}}
\maketitle

\begin{abstract}
{\scriptsize {\negthinspace\negthinspace\negthinspace\ } We obtain a
comparison result for solutions to nonlinear fully anisotropic elliptic
problems by means of anisotropic symmetrization. As consequence we deduce a
priori estimates for norms of the relevant solutions.}

\end{abstract}

\footnotetext{\noindent\textit{Mathematics Subject Classifications: 35B45,
35J25, 35B65}
\par
\noindent\textit{Key words: Anisotropic symmetrization rearrangements, A
priori estimates, Anisotropic Dirichlet problems}}


\numberwithin{equation}{section}

\numberwithin{equation}{section}

\section{\textbf{Introduction}}

In the present paper we treat the following class of anisotropic problems
\begin{equation}
\left\{
\begin{array}
[c]{ll}%
-\operatorname{div}\left(  a\left(  x,u,\nabla u\right)  \right)  =f\left(
x\right)  -\operatorname{div}\left(  g\left(  x\right)  \right)  &
\mbox{ in }\Omega\\
& \\
u=0 & \mbox{ on }\partial\Omega,
\end{array}
\right.  \label{Prob'}%
\end{equation}
where $\Omega$ is a bounded open subset of $\mathbb{R}^{N},$ $N\geq2,$
$a:\Omega\times\mathbb{R}\times\mathbb{R}^{N}\rightarrow\mathbb{R}^{N}$ is a
Carath\'{e}odory function such that, for a.e. $x\in\Omega$,%
\begin{equation}
a(x,\eta,\xi)\cdot\xi\geq\Phi\left(  \xi\right)  \quad\hbox{\rm
for}\;(\eta,\xi)\in\ \mathbb{R}\times{{\mathbb{R}}^{N}} \label{elliti'}%
\end{equation}
with $\Phi$ an $N-$dimensional Young function, and the data $f$ and $g$ are
measurable functions fulfilling a suitable summability condition. We emphasize
that the anisotropy of problem (\ref{Prob'}) is governed by a general
$N-$dimensional convex function of the gradient not necessary of polynomial type.

In the classical theory of regularity for solutions to anisotropic problems in
partial differential equations and in calculus of variations the anisotropy
condition depends on differential operators whose growth with respect to the
partial derivatives of $u$ is governed by different powers (see \emph{e.g.}
\cite{Gi, Mar,mercaldo, BMS, Ho, FS, St2, ELM, FGK, FGL, antontsev-chipot-08, castro}).
Problems governed by fully anisotropic growth conditions as in (\ref{elliti'})
have been recently studied in \cite{Clocal, cianchi immersione, cianchi
anisotropo,AC,A}. \newline There is also a large number of papers related to a
different type of anisotropy (see \emph{e.g.} \cite{AFTL, BFK,DPG, DPdB}).

Our aim is to obtain a comparison result for solutions to problem
\eqref{Prob'} relying upon anisotropic symmetrization in the spirit of
\cite{cianchi anisotropo} . We prove that the symmetric rearrangement of a
solution to anisotropic problem \eqref{Prob'} is pointwise dominated by the
radial solution to an appropriate isotropic problem. The novelty of our paper
is the presence of datum in divergence form.

In order to give an idea of our results we restrict ourself to a class of
anisotropic problems having a growth in the different partial derivatives
controlled by different powers, \textit{i.e.} when in \eqref{elliti'} we take
$\Phi(\xi)=\sum_{i=1}^{N}\lambda_{i}|\xi_{i}|^{p_{i}}$ for $\xi\in
{{\mathbb{R}}^{N}}$. More precisely, we deal with the following class of
problems whose prototypal example is:
\begin{equation}
\ \left\{
\begin{array}
[c]{ll}%
-\underset{i=1}{\overset{N}{%
{\displaystyle\sum}
}}\left(  \left\vert u_{x_{i}}\right\vert ^{p_{i}-2}u_{x_{i}}\right)  _{x_{i}%
}=f\left(  x\right)  -\underset{i=1}{\overset{N}{%
{\displaystyle\sum}
}}\dfrac{\partial g_{i}}{\partial x_{i}} & \mbox{  in }\Omega\vspace{0.2cm}\\
& \\
u=0 & \mbox{ on }\partial\Omega,
\end{array}
\right.  \label{P_p_i}%
\end{equation}
where $p_{i}>1$ for $i=1,\ldots,N$.

As well known the time evolution versions of problem (\ref{P_p_i}) provide the
mathematical models for natural phenomena in biology and fluid mechanics. For
example, they are the mathematical description of the dynamics of fluids in
anisotropic media when the conductivities of the media are different in
different directions (see \textit{e.g.} \cite{ADS}). They also appear in
biology as a model for the propagation of epidemic diseases in heterogeneous
domains (see \textit{e.g.} \cite{BK}).

As typically happens using symmetrization methods (see \textit{e.g.}
\cite{Ta1} and the bibliography starting with it) we reduce the estimates of
solutions to problem \eqref{P_p_i} to some simpler one-dimensional
inequalities. Namely, for some positive constant $C$, we show that
\begin{equation}
u^{\bigstar}\left(  x\right)  \!\leq C \,v\left(  x\right)  \text{
\ \ \ \ \ for \textit{a.e.} \ }\Omega^{\bigstar}\!\!\vspace{0.2cm},
\label{comparison}%
\end{equation}
where $\Omega^{\bigstar}$ is the ball centered in the origin having the same
measure as $\Omega$, $u^{\bigstar}$ is the symmetric rearrangement of a
solution $\!u$ to problem (\ref{P_p_i}) and $v$ is the radial solution to the
following isotropic problem
\begin{equation}
\!\!\left\{  \!\!%
\begin{array}
[c]{ll}%
\!\!-\operatorname{div}\left(  \Lambda\left\vert \nabla v\right\vert
^{\overline{p}-2}\nabla v\right)  \!=\!\!f^{\bigstar}\left(  x\right)
\!-\!\operatorname{div}\left(  \!\left(  \overline{p}\Lambda\right)
^{\!1/\overline{p}\!}\!\left(  \!\overline{p}^{\prime}G\left(  \!\omega
_{N}\left\vert x\right\vert ^{N}\!\right)  \!\right)  ^{\!1/\overline
{p}^{\prime}\!}\!\dfrac{x}{\left\vert x\right\vert }\!\right)  \! &
\mbox{  in  }\Omega^{\bigstar}\!\!\vspace{0.2cm}\\
& \\
\!\!v=0 & \!\mbox{ on }\partial\Omega^{\bigstar}\!.\!\!
\end{array}
\!\!\right.  \label{Prob simm pi}%
\end{equation}
Here, $\overline{p}$ is the harmonic mean of exponents $p_{1},\dots,p_{N}$,
$\omega_{N}$ is the measure of the $N-$dimensional unit ball, $f^{\bigstar}$
is the symmetric decreasing rearrangement of $f$ and $G$ is a suitable
function related to $\sum_{i=1}^{N}g_{i}^{p_{i}^{\prime}}$. In contrast to the
isotropic case not only the domain and the data of problem (\ref{Prob simm pi}%
) are symmetrized, but also the ellipticity condition is subject to an
appropriate symmetrization. Indeed the operator in problem (\ref{Prob simm pi}%
) is the isotropic $\overline{p}-$Laplacian.

Estimate (\ref{comparison}) is the starting point to obtain bounds for norms
of the solutions to problem \eqref{P_p_i} in terms of norms of the data.
Indeed, in view of \eqref{comparison} and the explicit expression of $v$, any
estimate for a rearrangement invariant norm of $u$ is reduced to an
appropriate one-dimensional norm inequality.

The paper is organized as follows. In Section 2 we recall some background. In
Section 3 we present the main comparison theorem and some regularity results
concerning problem (\ref{Prob'}). Section 4 contains applications to two
special models of the $N-$dimension Young function $\Phi$. In particular, when
$\Phi(\xi)=\sum_{i=1}^{N}\lambda_{i}|\xi_{i}|^{p_{i}}$ for $\xi\in
{{\mathbb{R}}^{N}}$, it is possible to give the functional setting for problem
\eqref{P_p_i} and then \textit{a priori} estimate (\ref{comparison}) allows us
to state conditions on data for the existence of solutions and to complete the
framework of regularity, showing how the regularity of the data influences
solutions's one. \bigskip

\section{Preliminaries}

\subsection{Young function}

Let $\Phi:\mathbb{R}^{N}\rightarrow\left[  0,+\infty\right[  $ be a
$N-$\textit{dimensional Young function}, namely an even convex function such
that%
\[
\Phi\left(  0\right)  =0\text{ \ \ and }\underset{\left\vert \xi\right\vert
\rightarrow+\infty}{\lim}\Phi\left(  \xi\right)  \text{ }=+\infty.
\]
A standard case considered in literature is when%
\begin{equation}
\Phi\left(  \xi\right)  =\underset{i=1}{\overset{N}{%
{\displaystyle\sum}
}}\lambda_{i}\left\vert \xi_{i}\right\vert ^{p_{i}}\text{ \ for }\xi
\in\mathbb{R}^{N}, \label{Fi pi}%
\end{equation}
for some $\lambda_{i}>0$ and $p_{i}>1$ for any $i=1,\ldots,N.$ An extension of
$\left(  \ref{Fi pi}\right)  $ is given by
\begin{equation}
\label{ex2}\Phi\left(  \xi\right)  =\underset{i=1}{\overset{N}{%
{\displaystyle\sum}
}}\Upsilon_{i}(\xi_{i})\text{ \ for }\xi\in\mathbb{R}^{N},
\end{equation}
where $\Upsilon_{i}$\ for $i=1,\ldots, N$ are one-dimensional Young functions
vanishing only at zero. For instance we can choose
\begin{equation}
\label{ex3}\Upsilon_{i}(s)=\left\vert s\right\vert ^{p_{i}}\left(  \log\left(
c+\left\vert s\right\vert \right)  \right)  ^{\alpha_{i}}\text{ \ \ for }s\in%
\mathbb{R}%
\end{equation}
and for $i=1,\ldots,N,$ where either $p_{i}>1$ and $\alpha_{i}\in%
\mathbb{R}
$ or $p_{i}=1$ and $\alpha_{i}\geq0$, and constant $c$ is large enough for
$\Upsilon_{i}$ to be convex. \newline In the case $N=2$, an example of
function $\Phi$ which is neither radial nor sum of one-dimensional functions
is given by
\begin{equation}
\label{ex4}\Phi\left(  \xi\right)  =\left\vert \xi_{1}-\xi_{2}\right\vert
^{\alpha}+\left\vert \xi_{1}\right\vert ^{\beta}\left(  \log\left(
c+\left\vert \xi_{1}\right\vert \right)  \right)  ^{\delta}\text{ \ for
}\left(  \xi_{1},\xi_{2}\right)  \in%
\mathbb{R}
^{2},
\end{equation}
where $\alpha,\beta\geq1,$ $\delta$ is either any real number or a nonnegative
number according to whether $\beta>1$ or $\beta=1$ and constant $c$ is large
enough for $\Phi$ to be convex.

The Young inequality tells us that
\begin{equation}
\xi\cdot\xi^{\prime}\leq\Phi\left(  \xi\right)  +\Phi_{\bullet}\left(
\xi^{\prime}\right)  \text{ \ \ for \ }\xi,\xi^{\prime}\in\mathbb{R}^{N},
\label{Young}%
\end{equation}
where $\Phi_{\bullet}$ is the \textit{Young conjugate} function of $\Phi$
given by%
\begin{equation}
\Phi_{\bullet}\left(  \xi^{\prime}\right)  =\sup\left\{  \xi\cdot\xi^{\prime
}-\Phi\left(  \xi\right)  :\xi\in\mathbb{R}^{N}\right\}  \text{ \ \ for \ }%
\xi^{\prime}\in\mathbb{R}^{N}. \label{Young func}%
\end{equation}
Here, \textquotedblleft$\;\cdot\;$\textquotedblright\ stands for scalar
product in ${{\mathbb{R}}^{N}}$. We observe that the function $\Phi_{\bullet}$
enjoys the same properties as $\Phi$ and is a $N$-dimensional Young function
if
\begin{equation}
\underset{\left\vert \xi\right\vert \rightarrow+\infty}{\lim}\frac{\Phi\left(
\xi\right)  }{\left\vert \xi\right\vert }=+\infty. \label{lim}%
\end{equation}
We recall that if $\digamma:\mathbb{R}^{N}\rightarrow\left[  0,+\infty\right[
$ is a convex function such that $\digamma\left(  0\right)  =0,$ then
\begin{equation}
\digamma\left(  \lambda\xi\right)  \left\{
\begin{array}
[c]{lll}%
\leq\lambda\digamma\left(  \xi\right)  & \text{if} & 0<\lambda\leq1\\
\geq\lambda\digamma\left(  \xi\right)  & \text{if} & \lambda\geq1
\end{array}
\right.  \label{costante}%
\end{equation}
for $\xi\in\mathbb{R}^{N}.$ Moreover, if $h:\left[  0,+\infty\right[
\rightarrow\left[  0,+\infty\right[  $ is convex and $h(0)=0$, then
\begin{equation}
h(s_{1}+s_{2})\geq h(s_{1})+h(s_{2})\text{ \ }\forall s_{1},s_{2}\in\left[
0,+\infty\right[  . \label{sublineare}%
\end{equation}

\subsection{Symmetrization}

A precise statement of our result requires the use of classical notions of
rearrangement and pseudo-rearrangement of a function, and of suitable
symmetrization of a Young function, introduced by Klimov in \cite{Klimov 74}.
\newline Let $u$ be a measurable function (continued by $0$ outside its
domain) fulfilling
\begin{equation}
\left\vert \{x\in\mathbb{R}^{N}:\left\vert u(x)\right\vert >t\}\right\vert
<+\infty\text{ \ \ \ for every }t>0. \label{insieme livello di misura finita}%
\end{equation}
The \textit{symmetric decreasing rearrangement} of $u$ is the function
$u^{\bigstar}:\mathbb{R}^{N}\rightarrow\left[  0,+\infty\right[  $
$\ $satisfying
\begin{equation}
\{x\in\mathbb{R}^{N}:u^{\bigstar}(x)>t\}=\{x\in\mathbb{R}^{N}:\left\vert
u(x)\right\vert >t\}^{\bigstar}\text{ \ for }t>0. \label{livello palla}%
\end{equation}
The \textit{decreasing rearrangement} $u^{\ast}$ of $u$ is defined as
\[
u^{\ast}(s)=\sup\{t>0:\mu_{u}(t)>s\}\text{ \ for }s\geq0,
\]
where
\[
\mu_{u}(t)=\left\vert \{x\in{\Omega}:\left\vert u(x)\right\vert
>t\}\right\vert \text{ \ \ \ \ for }t\geq0
\]
denotes the \textit{distribution function} of $u$. \newline Moreover
\[
u^{\bigstar}(x)=u^{\ast}(\omega_{N}\left\vert x\right\vert ^{N})\text{
\ \ }\hbox{\rm for a.e.}\;x\in{{\mathbb{R}}^{N}.}%
\]
Analogously, we define the \textit{symmetric increasing rearrangement}
$u_{\bigstar}$ on replacing \textquotedblleft$>$\textquotedblright\ by
\textquotedblleft$<$\textquotedblright\ in the definitions of the sets in
(\ref{insieme livello di misura finita}) and (\ref{livello palla}).

In what follows, we shall also make use of the function $u^{\ast\ast}$ defined
as
\[
u^{\ast\ast}(s)=\frac{1}{s}\int_{0}^{s}u^{\ast}(r)\;dr\qquad
\hbox{\rm for}\;s>0.
\]
Note that $u^{\ast\ast}$ is also non-increasing, and satisfies $u^{\ast
}(s)\leq u^{\ast\ast}(s)$ for $s>0$. We refer to \cite{BS} for details on
these topics.

Now, let us introduce the notion of pseudo-rearrangement (see \textit{e.g.}
\cite{AT}). Let $u$ be a measurable function on $\Omega$ and $h\in
L^{1}(\Omega)$. We will say that a function $H(s)$ is the
\textit{pseudo-rearrangement} of $h$ with respect to $u$ if there exists a
family $\mathcal{E(}\Omega\mathcal{)}$ of measurable subsets $E(s)$ of
$\Omega$\ with measure $s\in(0,\left\vert \Omega\right\vert )$ such that if
$s_{1}\leq s_{2}$ then $E(s_{1})\subseteq E(s_{2}),$ $E(s)=\left\{  x\in
\Omega:\left\vert u(x)\right\vert >u^{\ast}(s)\right\}  $ if there exists
$t\in%
\mathbb{R}
$ such that $s=\mu(t)$ and%

\[
H(s)=\frac{d}{ds}\int_{E(s)}h(x)dx\text{\ \ \ for a.e. \ }s\in(0,\left\vert
\Omega\right\vert ).
\]
Recall that if $h\in L^{p,q}(\Omega),$ then $H\in L^{p,q}(0,|\Omega|)$ and the
following estimate holds
\begin{equation}
\Vert H\Vert_{L^{p,q}(0,|\Omega|)}\leq\Vert h\Vert_{L^{p,q}(\Omega)},
\label{norm_pseudo}%
\end{equation}
where $L^{p,q}(\Omega)$ denotes the usual Lorentz spaces with $1<p\leq+\infty$
and $0<q\leq+\infty$.

Let $\Phi$ be a $N-$dimensional Young function. We denote by $\Phi
_{\blacklozenge}:\mathbb{R\rightarrow}\left[  0,+\infty\right[  $ the
symmetrization of $\Phi$ introduced in \cite{Klimov 74}. It is the
one-dimensional Young function fulfilling%
\begin{equation}
\Phi_{\blacklozenge}(\left\vert \xi\right\vert )=\Phi_{\bullet\bigstar\bullet
}\left(  \xi\right)  \text{ \ for }\xi\in\mathbb{R}^{N}, \label{def fi rombo}%
\end{equation}
namely it is the composition of Young conjugation, symmetric increasing
rearrangement and Young conjugate again. We stress that the functions
$\Phi_{\blacklozenge}$ and $\Phi_{\bigstar}$ are not equals in general, but
they are always equivalent, \emph{i.e.} there exist two positive constants
$K_{1}$ and $K_{2}$ such that
\begin{equation}
\Phi_{\bigstar}(K_{1}\xi)\leq\Phi_{\blacklozenge}(|\xi|)\leq\Phi_{\bigstar
}(K_{2}\xi)\qquad\hbox{\rm for}\;\;\xi\in{{\mathbb{R}}^{N}}. \label{equiv}%
\end{equation}
Moreover $\Phi_{\blacklozenge}(\left\vert \,\cdot\,\right\vert )=\Phi
_{\bigstar}\left(  \,\cdot\,\right)  $ if and only if $\Phi$ is
radial,\textit{ i.e.} $\Phi=\Phi_{\bigstar}.$ \newline When $\Phi$ is given by
\eqref{Fi pi}, easy calculations show (see \textit{e.g. }\cite{cianchi
anisotropo}), that
\begin{equation}
\Phi_{\blacklozenge}(\left\vert \xi\right\vert )=\Lambda\left\vert
\xi\right\vert ^{\overline{p}}, \label{fi rombo}%
\end{equation}
where $\overline{p}$ is the harmonic mean of exponents $p_{1},\ldots,p_{N}$,
\textit{i.e.}%
\begin{equation}
\frac{1}{\overline{p}}=\frac{1}{N}\overset{N}{\underset{i=1}{\sum}}\frac
{1}{p_{i}} \label{p barrato bis}%
\end{equation}
and
\begin{equation}
\Lambda=\frac{2^{\overline{p}}\left(  \overline{p}-1\right)  ^{\overline{p}%
-1}}{\overline{p}^{\overline{p}}}\left[  \frac{\underset{i=1}{\overset{N}{\Pi
}}p_{i}^{\frac{1}{p_{i}}}\left(  p_{i}^{\prime}\right)  ^{\frac{1}%
{p_{i}^{\prime}}}\Gamma(1+1/p_{i}^{\prime})}{\omega_{N}\Gamma(1+N/\overline
{p}^{\prime})}\right]  ^{\frac{\overline{p}}{N}}\left(  \underset
{i=1}{\overset{N}{\Pi}}\lambda_{i}^{\frac{1}{p_{i}}}\right)  ^{\frac
{\overline{p}}{N}} \label{lapda}%
\end{equation}
with $\omega_{N}$ the measure of the $N-$dimensional unit ball, $\Gamma$ the
Gamma function and $p_{i}^{\prime}=\frac{p_{i}}{p_{i}-1}$ with the usual
conventions if $p_{i}=1$. \newline In the more general case \eqref{ex2}, it is
possible to show (see \textit{e.g.} \cite{cianchi immersione}), that
\begin{equation}
\Phi_{\blacklozenge}^{-1}(r)\approx\left(  \underset{i=1}{\overset{N}{\Pi}%
}\Upsilon_{i}^{-1}(.)\right)  ^{\frac{1}{N}}(r)\text{ \ for }r\geq0,
\label{inversa fi rombo}%
\end{equation}
where $\Upsilon_{i}^{-1}$ denotes the inverse function of $\Upsilon_{i}$ to
$\left[  0,+\infty\right)  $ (see \textit{e.g.} \cite{cianchi immersione}).
From (\ref{inversa fi rombo}) we deduce that%
\begin{equation}
\Phi_{\blacklozenge}(s)\approx\left\vert s\right\vert ^{\overline{p}}\left(
\log\left(  c+\left\vert s\right\vert \right)  \right)  ^{\frac{\overline{p}%
}{N}\underset{i=1}{\overset{N}{%
{\textstyle\sum}
}}\frac{\alpha_{i}}{p_{i}}} \label{fi rombo con log}%
\end{equation}
near infinity, when $\Upsilon_{i}$ is given by \eqref{ex3}. \newline Finally,
when $\Phi$ is defined as in \eqref{ex4}, $\Phi_{\blacklozenge}(s)$ is
equivalent to $\left\vert s\right\vert ^{\frac{2\alpha\beta}{\alpha+\beta}%
}\log^{\frac{\alpha\delta}{\alpha+\beta}}\left(  c+\left\vert s\right\vert
\right)  $ near infinity and to $\left\vert s\right\vert ^{\frac{2\alpha\beta
}{\alpha+\beta}}$ near zero.

In what follows we will use the function
\begin{equation}
\Psi_{\blacklozenge}(s)=\left\{
\begin{array}
[c]{ll}%
\displaystyle\frac{\Phi_{\blacklozenge}(s)}{s} & \hbox{\rm for $s >0 $}\\
& \\
0 & \hbox{\rm for $s=0$}
\end{array}
\right.  \label{Psi}%
\end{equation}
that is not-decreasing. If, in addiction,%
\begin{equation}
\underset{s\rightarrow0^{+}}{\lim}\frac{\Phi_{\blacklozenge}\left(  s\right)
}{s}=0, \label{psi incrising}%
\end{equation}
then $\Psi_{\blacklozenge}$ $\ $is strictly increasing in $\left[
s_{0},+\infty\right)  $ with
\begin{equation}
s_{0}=\sup\left\{  s\geq0:\Phi_{\blacklozenge}\left(  s\right)  =0\right\}  .
\label{s0}%
\end{equation}
Finally we observe that%
\begin{equation}
\Phi_{\blacklozenge\bullet}\left(  r\right)  \leq\Phi_{\blacklozenge}\left(
\Psi_{\blacklozenge}^{-1}\left(  r\right)  \right)  \text{ \ for \ \ }r\geq0,
\label{dis PsiRombo}%
\end{equation}
where $\Psi_{\blacklozenge}^{-1}$ denotes the inverse of $\Psi_{\blacklozenge
}$ restricted to $[s_{0},+\infty)$.

We remember that in the anisotropic setting a \textit{Polya-Szeg\"{o}
principle} holds (see \cite{cianchi anisotropo}). Let $u$ be a weakly
differentiable function in $%
\mathbb{R}
^{N}$ satisfying (\ref{insieme livello di misura finita}) and such that
$\int_{\mathbb{R}^{N}}\Phi\left(  \nabla u\right)  dx<+\infty,$ then
$u^{\bigstar}$ is weakly differentiable in $%
\mathbb{R}
^{N}$ and
\begin{equation}
\int_{\mathbb{R}^{N}}\Phi_{\blacklozenge}\left(  \left\vert \nabla
u^{\bigstar}\right\vert \right)  dx\leq\int_{\mathbb{R}^{N}}\Phi\left(  \nabla
u\right)  dx\text{ .\ } \label{cianchi}%
\end{equation}

\subsection{Function spaces}

Finally we recall some definitions on function spaces that we will be useful
in the following. In our framework, where any kind of Young function is
admitted, Orlicz norm are a natural class in which obtain a priori estimate.
Given a one-dimensional Young function $A,$ the \textit{Orlicz space}
$L^{A}\left(  \Omega\right)  $ is defined as the Banach space of all
measurable functions $h$ on $\Omega$ such that the Luxemburg norm
%
\begin{equation}
\Vert h\Vert_{L^{A}(\Omega)}=\inf\left\{  k>0:\int_{\Omega}A\left(
\frac{h(x)}{k}\right)  \;dx\leq1\right\}  \label{L^A_norm}%
\end{equation}
is finite. We observe that $\Vert h\Vert_{L^{A}(\Omega)}$ is a rearrangement
invariant (briefly \textit{r.i.}) norm since $\Vert h\Vert_{L^{A}(\Omega
)}=\Vert h^{\ast}\Vert_{L^{A}(0,|\Omega|)}$ (see \textit{e.g.} \cite{BS}). As
well-known examples of Orlicz spaces are Lebesgue spaces and Lorentz spaces.

\noindent Now we introduce the Orlicz-Lorentz space $X_{A,N}(\Omega).$

Let $a$ be the non-decreasing, left-continuous function in $[0,\infty)$ such
that
\[
A(s)=\int_{0}^{|s|}a(r)\;dr\qquad\hbox{\rm for}\;s\in\mathbb{R}.
\]
We define $B$ as
\[
B(s)=\int_{0}^{|s|}b(r)\,dr\qquad\text{for}\;s\in\mathbb{R},
\]
where $b$ is the non-decreasing, left-continuous function in $[0,\infty)$
whose (generalized) left-continuous inverse obeys
\begin{equation}
b^{-1}(s)=\left(  \int_{a^{-1}(s)}^{\infty}\,\left(  \int_{0}^{r}\left(
\frac{1}{a(\tau)}\right)  ^{\frac{1}{N-1}}\,d\tau\right)  ^{-N}\,\frac
{dr}{a(r)^{N^{\prime}}}\right)  ^{\frac{1}{1-N}}\qquad\text{for}\;s\geq0,
\label{b}%
\end{equation}
and $a^{-1}$ and $b^{-1}$ are the (generalized) left-continuous inverse of $a$
and $b$, respectively. We define the \textit{Orlicz-Lorentz space}
$X_{A,N}(\Omega)$ as the set of the measurable functions $h$ in $\Omega$ such
that the quantity
\begin{equation}
\Vert h\Vert_{X_{A,N}(\Omega)}=\Vert s^{-\frac{1}{N}}h^{\ast}(s)\Vert
_{L^{B}(0,|\Omega|)} \label{O-L_norm}%
\end{equation}
is finite (see \textit{e.g. }\cite{cianchi immersione}). We observe that
$X_{A,N}(\Omega)$ is a \textit{r.i.} space endowed with the norm
\eqref{O-L_norm}. We may assume, without loss of generality, that the integral
appearing in relation \eqref{b} converges, namely
\[
\int_{0}\left(  \frac{1}{a(s)}\right)  ^{\frac{1}{N-1}}\,ds<+\infty,
\]
or, equivalently, that
\begin{equation}
\int_{0}\left(  \frac{s}{A(s)}\right)  ^{\frac{1}{N-1}}\,ds<+\infty.
\label{int_A}%
\end{equation}
Indeed such a condition is not a restriction, since replacing $A$, if
necessary, by an equivalent Young function near $0$ and making the integral in
\eqref{int_A} converge, turns $\Vert\cdot\Vert_{L^{A}(\Omega)}$ into an
equivalent norm.

\noindent We conclude this section by recalling the definition of
\textit{Lorentz-Zygmund space} $L^{p,q}\left(  \log L\right)  ^{\alpha}\left(
\Omega\right)  $ for $0<p,q\leq\infty$, $-\infty<\alpha<+\infty$. It is the
space of all measurable functions $h$ on $\Omega$ such that%
\begin{equation}
||h||_{L^{p,q}(\log L)^{\alpha}(\Omega)}=\left\{
\begin{array}
[c]{ll}%
\left(  {\displaystyle\int_{0}^{\left\vert \Omega\right\vert }}\left[
s^{\frac{1}{p}}\left(  1+\log\frac{\left\vert \Omega\right\vert }{s}\right)
^{\alpha}h^{\ast}(s)\right]  ^{q}\frac{ds}{s}\right)  ^{\frac{1}{q}} &
\text{if }0<q<\infty\\
\underset{s\in(0,\left\vert \Omega\right\vert )}{\sup}\left[  s^{\frac{1}{p}%
}\left(  1+\log\frac{\left\vert \Omega\right\vert }{s}\right)  ^{\alpha
}h^{\ast}(s)\right]  & \text{if\ }q=\infty
\end{array}
\right.  \label{definizione Lorenz-Zygmund}%
\end{equation}
is finite. Note that if $p=+\infty$, in order to obtain a non trivial space,
we have to require $q<+\infty$ and $\alpha+\frac{1}{q}<0$ or $q=+\infty$ and
$\alpha\leq0$.

\section{Assumptions and main results}

Let us consider the following problem%
\begin{equation}
\left\{
\begin{array}
[c]{ll}%
-\operatorname{div}\left(  a\left(  x,u,\nabla u\right)  \right)  =f\left(
x\right)  -\operatorname{div}\left(  g\left(  x\right)  \right)  &
\mbox{ in }\Omega\\
& \\
u=0 & \mbox{ on }\partial\Omega,
\end{array}
\right.  \label{Prob}%
\end{equation}
where $\Omega$ is an bounded open subset of $\mathbb{R}^{N},$ $N\geq2,$
$a:\Omega\times\mathbb{R}\times\mathbb{R}^{N}\rightarrow\mathbb{R}^{N}$ is a
Carath\'{e}odory function such that, for a.e. $x\in\Omega$,%
\begin{equation}
a(x,\eta,\xi)\cdot\xi\geq\Phi\left(  \xi\right)  \text{ } \label{elliti}%
\end{equation}
for $\left(  \eta,\xi\right)  \in\mathbb{R}\times\mathbb{R}^{N},\Phi
:\mathbb{R}^{N}\rightarrow\left[  0,+\infty\right[  $ is $N-$dimensional Young function.

\noindent Moreover we make the following assumption on the data%
\begin{equation}
s^{\frac{1}{N}}f^{\ast\ast}(s)\in L^{\Phi_{\blacklozenge\bullet}}\left(
0,\left\vert \Omega\right\vert \right)  \text{ and }\int_{\Omega}\Phi
_{\bullet}\left(  g\right)  dx<\infty, \label{dati}%
\end{equation}
where $\Phi_{\blacklozenge}$ is defined in (\ref{def fi rombo}) and
$\Phi_{\bullet}$ is defined in (\ref{Young func})$.$ A function $u\in
V_{0}^{1,\Phi}(\Omega)$ is a weak solution to (\ref{Prob}) if%
\begin{equation}
\int_{\Omega}a\left(  x,u,\nabla u\right)  \cdot\nabla\varphi dx=\int_{\Omega
}\left(  f\varphi+g\cdot\nabla\varphi\right)  dx\text{ }\ \ \text{for every
}\varphi\in V_{0}^{1,\Phi}(\Omega), \label{sol deb}%
\end{equation}
where
\[
V_{0}^{1,\Phi}(\Omega)=\left\{
\begin{array}
[c]{c}%
u:u\text{ is real-valued function in }\Omega\text{ whose continuation by
}0\text{ outside }\Omega\text{ is weakly }\\
\text{differentiable in }\mathbb{R}^{N}\text{ and satisfy }\int_{\Omega}%
\Phi\left(  \nabla u\right)  dx<\infty
\end{array}
\right\}  .
\]
We remark that\ $V_{0}^{1,\Phi}(\Omega)$ is always a convex set, but not
necessary a linear space (unless $\Phi$ satisfies the so called $\Delta_{2}%
-$condition, see \cite{rao}). We stress that we are not interested in the
existence of solution to problem (\ref{Prob}). Nevertheless using embedding
theorem for anisotropic Orlicz-Sobolev spaces (see \cite{cianchi immersione})
we expound some conditions to give on $a(x,\eta,\xi),f\left(  x\right)  $ and
$g\left(  x\right)  $ in order to guarantee that (\ref{sol deb}) is well
posed. More precisely if
\begin{equation}
\int^{+\infty}\left(  \frac{s}{\Phi_{\blacklozenge}(s)}\right)  ^{\frac
{1}{N-1}}<+\infty, \label{existence_1}%
\end{equation}
then\ any function $u\in V_{0}^{1,\Phi}(\Omega)$ is essentially bounded.
Consequently the left-hand side of (\ref{sol deb}) is always finite and
right-hand side of (\ref{sol deb}) is finite if $f\in$ $L^{1}\left(
\Omega\right)  $ and $\int_{\Omega}\Phi_{\bullet}\left(  g\right)  dx<\infty$.
Otherwise if
\begin{equation}
\int^{+\infty}\left(  \frac{s}{\Phi_{\blacklozenge}(s)}\right)  ^{\frac
{1}{N-1}}=+\infty, \label{existence_2}%
\end{equation}
then any function $u\in V_{0}^{1,\Phi}(\Omega)$ satisfies%
\[
\int_{\Omega}\Phi_{N}(c\,u(x))dx<+\infty
\]
for every $c\in%
\mathbb{R}
,$ where
\begin{equation}
\Phi_{N}(s)=\Phi_{\blacklozenge}(H_{\Phi_{\blacklozenge}}^{-1}(|s|))\qquad
\text{\ and \ }\qquad H_{\Phi_{\blacklozenge}}(r)=\left(  \int_{0}^{r}\left(
\frac{s}{\Phi_{\blacklozenge}(s)}\right)  ^{\frac{1}{N-1}}ds\right)
^{\frac{1}{N^{\prime}}} \label{H}%
\end{equation}
for $s\in%
\mathbb{R}
$ and $r\geq0.$ Obviously, for $H$ and $\Phi_{N}$ to be well defined,
$\Phi_{\blacklozenge}$ has to fulfill the following condition
\begin{equation}
\int_{0}\left(  \frac{s}{\Phi_{\blacklozenge}(s)}\right)  ^{\frac{1}{N-1}%
}\;ds<\infty. \label{cond1}%
\end{equation}
In order to guarantee (\ref{sol deb}) is well-posed we need to require, when
condition (\ref{existence_2}) is in force, not only (\ref{dati}) but also the
following one:
\[
\Phi_{\bullet}(a(x,\eta,\xi))\leq c\left[  \theta\left(  x\right)  +M\left(
\eta\right)  +\Phi(\xi)\right]  \qquad\text{for}\;\;a.e.\;\;x\in
\Omega,\;\text{for}\;\left(  \eta,\xi\right)  \in\mathbb{R}\times
\mathbb{R}^{N},
\]
where $c$ is a positive constant, $\theta\in L^{1}\left(  \Omega\right)  $ is
a positive function and $M:\mathbb{R\rightarrow}\left[  0,+\infty\right[  $ is
a continuous function such that $M\left(  \eta\right)  \leq\Phi_{N}\left(
k\eta\right)  $ for some $k>0$ and for every $\eta\in\mathbb{R}.$

For more comments on the consistence of equation (\ref{sol deb}) we refer to
\cite{cianchi anisotropo} and to Section 4, where a particular class of
problems is considered.

Given two positive constants $C_{1}$ and $C_{2}$, let us consider the
following radial problem
\begin{equation}
\!\!\!\left\{  \!%
\begin{array}
[c]{ll}%
\!-\operatorname{div}\left(  \!\dfrac{\Phi_{\blacklozenge}\left(  \left\vert
\nabla v\right\vert \right)  }{\left\vert \nabla v\right\vert ^{2}}\nabla
v\!\right)  \!=\!C_{1}\left(  \!f^{\bigstar}\left(  x\right)
-\operatorname{div}\left(  \!\Phi_{\blacklozenge\bullet}^{-1}\left(  G\left(
\omega_{N}\left\vert x\right\vert ^{N}\right)  \!\right)  \frac{x}{\left\vert
x\right\vert }\!\right)  \!\right)  \! & \!\mbox{ in }\Omega^{\bigstar}\!\\
& \\
\!v=0 & \!\mbox{ on }\partial\Omega^{\bigstar},\!
\end{array}
\!\right.  \! \label{Prob_sim}%
\end{equation}
where $\Omega^{\bigstar}$ is the ball centered in the origin having the same
measure as $\Omega,$ $f^{\bigstar}$ is the symmetric decreasing rearrangement
of $f$, $G$ is the pseudo-rearrangement of $\Phi_{\bullet}\left(
C_{2}\;g\right)  $ with respect to $u$, $\Phi_{\blacklozenge\bullet}^{-1}$
denotes the inverse of the Young conjugate of $\Phi_{\blacklozenge}$ and
$\omega_{N}$ is the measure of the $N$-dimensional unit ball$.$

It is easily to verified that the solution $v\left(  x\right)  $ to problem
(\ref{Prob_sim}) there exists if and only if either%
\begin{equation}
\underset{r\rightarrow+\infty}{\lim}\Psi_{\blacklozenge}\left(  r\right)
=+\infty\label{cond codominio1}%
\end{equation}
or%
\begin{equation}
C_{1}\left(  \dfrac{r^{1/N}f^{\ast\ast}(r)}{N\omega_{N}^{1/N}}+\Phi
_{\blacklozenge\bullet}^{-1}\left(  G\left(  r\right)  \right)  \right)
<\text{\ }\underset{r\rightarrow+\infty}{\lim}\Psi_{\blacklozenge}\left(
r\right)  \text{ \ \ \ for every }r>0 \label{cond codominio 2}%
\end{equation}
and%
\begin{equation}
\displaystyle\int_{0}^{\left\vert \Omega\right\vert }\Phi_{\blacklozenge
}\left(  \Psi_{\blacklozenge}^{-1}\left(  C_{1}\dfrac{r^{1/N}f^{\ast\ast}%
(r)}{N\omega_{N}^{1/N}}+C_{1}\Phi_{\blacklozenge\bullet}^{-1}\left(  G\left(
r\right)  \right)  \right)  \right)  dr<+\infty\label{grad-finito2}%
\end{equation}
holds, where $\Psi_{\blacklozenge}$ is defined in (\ref{Psi}). We observe that
(\ref{grad-finito2}) is equivalent to%
\[
\displaystyle\int_{\Omega^{\bigstar}}\Phi_{\blacklozenge}\left(  \left\vert
\nabla v\right\vert \right)  dx<+\infty.
\]
The main result of this paper is the following comparison result.

\begin{theorem}
\label{th confronto} Let $\Phi:\mathbb{R}^{N}\rightarrow\left[  0,+\infty
\right[  $ be a $N$-dimensional Young function fulfilling (\ref{lim}) and
(\ref{psi incrising}). Suppose that conditions (\ref{elliti}), (\ref{dati}),
(\ref{cond codominio1}) and (\ref{grad-finito2}) hold. If u is a weak solution
to problem (\ref{Prob}), then there exists two positive constants $C_{1}$ and
$C_{2}$ (independent of $u$) such that
\begin{equation}
\!\!u^{\bigstar}\left(  x\right)  \!\leq\!v\left(  x\right)  \!\text{
\ \ \ for a.e. \ }x\in\Omega^{\bigstar}, \label{confronto}%
\end{equation}
where $v\left(  x\right)  $ is the spherically symmetric solution to problem
(\ref{Prob_sim}) given by
\begin{equation}
v\left(  x\right)  =\int_{\omega_{N}\left\vert x\right\vert ^{N}}^{\left\vert
\Omega\right\vert }\frac{1}{N\omega_{N}^{1/N}r^{1/N^{\prime}}}\Psi
_{\blacklozenge}^{-1}\left(  C_{1}\frac{r^{1/N}f^{\ast\ast}(r)}{N\omega
_{N}^{1/N}}+C_{1}\Phi_{\blacklozenge\bullet}^{-1}\left(  G\left(  r\right)
\right)  \right)  dr \label{v}%
\end{equation}
for $x\in\Omega^{\bigstar}$ and $G$ is the pseudo-rearrangement of
$\Phi_{\bullet}\left(  C_{2}\;g\right)  $ with respect to $u$. Moreover%
\begin{equation}
\int_{\Omega}\Phi\left(  \nabla u\left(  x\right)  \right)  dx\leq\int
_{\Omega^{\bigstar}}\Phi_{\blacklozenge}\left(  \left\vert \nabla v\left(
x\right)  \right\vert \right)  dx. \label{conf-grad}%
\end{equation}

\end{theorem}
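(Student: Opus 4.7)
The plan is to adapt Talenti's symmetrization technique to the anisotropic framework, extending the approach of \cite{cianchi anisotropo} so as to accommodate the divergence-form datum $g$. For $t,h>0$ I would first insert the truncation $\varphi_{t,h}(x):=\operatorname{sign}(u(x))\min\{(|u(x)|-t)_+,h\}$ into the weak formulation (\ref{sol deb}); since $\nabla\varphi_{t,h}=\chi_{\{t<|u|<t+h\}}\nabla u$, the ellipticity (\ref{elliti}) gives
\[
\int_{\{t<|u|<t+h\}}\!\Phi(\nabla u)\,dx\;\le\;\int_\Omega f\,\varphi_{t,h}\,dx+\int_{\{t<|u|<t+h\}}\!\operatorname{sign}(u)\,g\!\cdot\!\nabla u\,dx.
\]
Dividing by $h$ and letting $h\to 0^+$ produces a differential inequality in $t$ for $F(t):=\int_{\{|u|>t\}}\Phi(\nabla u)\,dx$. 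The $f$-term I bound by Hardy--Littlewood, $\int_{\{|u|>t\}}|f|\,dx\le\mu_u(t)f^{**}(\mu_u(t))$, while the $g$-term I treat with the anisotropic Young inequality (\ref{Young}) combined with the scaling (\ref{costante}), giving $\operatorname{sign}(u)\,g\cdot\nabla u\le\tfrac12\Phi(\nabla u)+\Phi_\bullet(2g)$. Absorbing the $\tfrac12\Phi(\nabla u)$-piece into the left-hand side and rewriting the remaining $g$-integral through the pseudo-rearrangement $G$ of $\Phi_\bullet(2g)$ with respect to $u$ yields (with $C_2=2$)
\[
-F'(t)\;\le\;2\mu_u(t)f^{**}(\mu_u(t))+2\,G(\mu_u(t))\,(-\mu_u'(t)).
\]

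Next I would apply the anisotropic P\'olya--Szeg\H{o} principle (\ref{cianchi}) to the same truncation $\varphi_{t,h}$, whose symmetric decreasing rearrangement is $\min\{(u^\bigstar-t)_+,h\}$, to obtain $-F'(t)\ge-\frac{d}{dt}\int_{\{u^\bigstar>t\}}\Phi_\blacklozenge(|\nabla u^\bigstar|)\,dx$. Since $|\nabla u^\bigstar|$ is constant on each sphere $\{u^\bigstar=t\}$, the coarea formula evaluates the latter derivative explicitly. Setting $s=\mu_u(t)$ and
\[
Q(s):=-u^{*\prime}(s)\,N\omega_N^{1/N}s^{1/N'},\qquad \alpha(s):=\frac{s^{1/N}f^{**}(s)}{N\omega_N^{1/N}},
\]
and dividing through by $N\omega_N^{1/N}s^{1/N'}$, the preceding estimates condense into the implicit pointwise inequality
\[
\Psi_\blacklozenge(Q(s))\;\le\;2\alpha(s)+\frac{2\,G(s)}{Q(s)}.
\]

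The main obstacle is to decouple $Q$ from the right-hand side and cast the bound in the form $Q(s)\le\Psi_\blacklozenge^{-1}(C_1\alpha(s)+C_1\Phi_{\blacklozenge\bullet}^{-1}(G(s)))$ that appears in (\ref{v}). I would split according to which of the two right-hand terms dominates: in the $f$-dominated regime the implicit inequality collapses to $\Psi_\blacklozenge(Q)\le 4\alpha$, so $Q\le\Psi_\blacklozenge^{-1}(4\alpha)$; in the $g$-dominated regime one instead gets $\Phi_\blacklozenge(Q)=Q\,\Psi_\blacklozenge(Q)\le 4G(s)$, and the crucial one-dimensional estimate (\ref{dis PsiRombo}), $\Phi_{\blacklozenge\bullet}(r)\le\Phi_\blacklozenge(\Psi_\blacklozenge^{-1}(r))$, together with the convexity property (\ref{costante}) of $\Phi_\blacklozenge$, delivers $Q\le 4\Psi_\blacklozenge^{-1}(\Phi_{\blacklozenge\bullet}^{-1}(G(s)))$. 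Combining the two regimes through the monotonicity of $\Psi_\blacklozenge^{-1}$ and absorbing the numerical factors into a single constant $C_1$ (depending only on $\Phi$) yields the sought pointwise bound on $Q$; integrating $-u^{*\prime}(s)=Q(s)/(N\omega_N^{1/N}s^{1/N'})$ from $\omega_N|x|^N$ to $|\Omega|$, with $u^*(|\Omega|)=0$, then delivers (\ref{confronto}). Finally, (\ref{conf-grad}) follows by integrating the differential inequality for $-F'(t)$ over $t\in(0,\infty)$ and matching against the explicit expression $\int_{\Omega^\bigstar}\Phi_\blacklozenge(|\nabla v|)\,dx=\int_0^{|\Omega|}r\,\Psi_\blacklozenge^{-1}(C_1\alpha(r)+C_1\Phi_{\blacklozenge\bullet}^{-1}(G(r)))\,dr$ obtained via the identity $\Phi_\blacklozenge(\Psi_\blacklozenge^{-1}(r))=r\,\Psi_\blacklozenge^{-1}(r)$.
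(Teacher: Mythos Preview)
Your route to the pointwise comparison (\ref{confronto}) is essentially correct and is a legitimate alternative to the paper's. Both arguments start from the same test function and the same two ingredients: the upper bound on $-F'(t)$ coming from ellipticity, Hardy--Littlewood and Young, and the lower bound coming from P\'olya--Szeg\H{o} (your explicit evaluation via coarea on the radial function is equivalent to the paper's Jensen-based inequality (\ref{Cianchi_Dis})). The difference lies in the decoupling. You combine the two bounds into the implicit inequality $\Psi_\blacklozenge(Q)\le 2\alpha+2G/Q$ and resolve it by a case split, whereas the paper keeps the two bounds separate: it multiplies the $f$-term of the upper bound by the factor in (\ref{Cianchi_Dis}) (which is $\ge 1$), applies Young \emph{a second time} to reabsorb another $\varepsilon$-portion of $-F'(t)$, and so reaches the clean form (\ref{phi2}), namely $-F'(t)\le -\mu_u'(t)\,\Phi_{\blacklozenge\bullet}\bigl(C_1\alpha(\mu_u(t))+C_1\Phi_{\blacklozenge\bullet}^{-1}(G(\mu_u(t)))\bigr)$. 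One small slip in your $g$-dominated case: from $\Phi_\blacklozenge(Q)\le 4G$ you should pass to $Q\le\Phi_\blacklozenge^{-1}(4G)\le\Psi_\blacklozenge^{-1}\bigl(\Phi_{\blacklozenge\bullet}^{-1}(4G)\bigr)\le\Psi_\blacklozenge^{-1}\bigl(4\,\Phi_{\blacklozenge\bullet}^{-1}(G)\bigr)$ (the constant goes \emph{inside} $\Psi_\blacklozenge^{-1}$); with the $4$ outside, merging the two cases would require an unassumed $\Delta_2$ condition on $\Psi_\blacklozenge$.

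There is, however, a genuine gap in your treatment of the gradient estimate (\ref{conf-grad}). Integrating your differential inequality over $t$ gives
\[
\int_\Omega\Phi(\nabla u)\,dx\;\le\;2\int_0^\infty \mu_u(t)f^{**}(\mu_u(t))\,dt+2\int_0^{|\Omega|}G(s)\,ds
=2\int_0^{|\Omega|}f^*(s)u^*(s)\,ds+2\int_0^{|\Omega|}G(s)\,ds,
\]
and the first term still depends on $u$. Moreover, your explicit formula $\int_{\Omega^\bigstar}\Phi_\blacklozenge(|\nabla v|)\,dx=\int_0^{|\Omega|}r\,\Psi_\blacklozenge^{-1}(\ldots)\,dr$ is wrong: the identity $\Phi_\blacklozenge(\Psi_\blacklozenge^{-1}(\rho))=\rho\,\Psi_\blacklozenge^{-1}(\rho)$ has $\rho$, not the integration variable $r$, as the multiplier. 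The paper avoids this difficulty precisely because it has (\ref{phi2}) in hand: there the right-hand side carries the factor $-\mu_u'(t)$, so a change of variables $s=\mu_u(t)$ immediately yields $\int_0^{|\Omega|}\Phi_{\blacklozenge\bullet}(\ldots)\,ds$, and then (\ref{dis PsiRombo}) converts this into $\int_{\Omega^\bigstar}\Phi_\blacklozenge(|\nabla v|)\,dx$. Your case-analysis bypasses (\ref{phi2}) entirely, so you never obtain an upper bound on $-F'(t)$ in which the $f$-part is multiplied by $-\mu_u'(t)$; to recover (\ref{conf-grad}) along your lines you would need either to feed (\ref{confronto}) back into the $\int f^*u^*$ term, or to insert the paper's ``multiply by $1\le(\text{\ref{Cianchi_Dis}})$ and reabsorb'' step after all.
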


\noindent

\begin{proof}
We define the functions $u_{\kappa,t}:\Omega\rightarrow$\textbf{ }$\mathbb{R}$
as
\[
u_{\kappa,t}\left(  x\right)  =\left\{
\begin{array}
[c]{ll}%
0 & \mbox{ if }\left\vert u\left(  x\right)  \right\vert \leq t,\vspace
{0.2cm}\\
\left(  \left\vert u\left(  x\right)  \right\vert -t\right)  \text{sign}%
\left(  u\left(  x\right)  \right)  & \mbox{ if }t<\left\vert u\left(
x\right)  \right\vert \leq t+\kappa\\
& \\
\kappa\;\text{sign}\left(  u\left(  x\right)  \right)  & \mbox{ if
}t+\kappa<\left\vert u\left(  x\right)  \right\vert ,
\end{array}
\right.
\]
for any fixed $\ t$ and $\kappa>0.$ This function can be choose as test
function in (\ref{sol deb}). Reasoning as in \cite{cianchi anisotropo} we
have
\begin{equation}
\int_{t<\left\vert u\right\vert <t+\kappa}\Phi\left(  \nabla u\right)
dx\geq\int_{t<u^{\bigstar}<t+\kappa}\Phi_{\blacklozenge}\left(  \left\vert
\nabla u^{\bigstar}\right\vert \right)  dx \label{A}%
\end{equation}
and by (\ref{elliti})
\begin{align}
\frac{1}{\kappa}\int_{t<\left\vert u\right\vert \leq t+\kappa}\Phi\left(
\nabla u\right)  dx  &  \leq\frac{1}{\kappa}\int_{t<\left\vert u\right\vert
\leq t+\kappa}f(x)\left(  \left\vert u\left(  x\right)  \right\vert -t\right)
\text{sign}\left(  u\left(  x\right)  \right)  \text{ }dx\label{B}\\
&  +\int_{\left\vert u\right\vert >t+\kappa}f(x)\text{sign}\left(  u\left(
x\right)  \right)  \text{ }dx+\frac{1}{\kappa}\int_{t<\left\vert u\right\vert
\leq t+\kappa}g\cdot\nabla u\text{ }dx.\nonumber
\end{align}
Letting $\kappa\rightarrow0^{+}$, we obtain%
\begin{equation}
-\frac{d}{dt}\int_{\left\vert u\right\vert >t}\Phi\left(  \nabla u\right)
dx\leq\int_{0}^{\mu_{u}\left(  t\right)  }f^{\ast}\left(  s\right)
ds-\frac{d}{dt}\int_{\left\vert u\right\vert >t}g\cdot\nabla u\text{ }dx.
\label{C}%
\end{equation}
Using Young inequality (\ref{Young}) and inequality (\ref{costante}) in
(\ref{C}), we have%
\[
-\frac{d}{dt}\int_{\left\vert u\right\vert >t}\Phi\left(  \nabla u\right)
dx\leq\int_{0}^{\mu_{u}\left(  t\right)  }f^{\ast}\left(  s\right)
ds-\frac{d}{dt}\int_{\left\vert u\right\vert >t}\Phi_{\bullet}\left(
\frac{g\left(  x\right)  }{\varepsilon}\right)  dx-\varepsilon\frac{d}{dt}%
\int_{\left\vert u\right\vert >t}\Phi\left(  \nabla u\right)  dx
\]
for every $0<\varepsilon<1.$ Then we get%
\begin{equation}
-\left(  1-\varepsilon\right)  \frac{d}{dt}\int_{\left\vert u\right\vert
>t}\Phi\left(  \nabla u\right)  dx\leq\int_{0}^{\mu_{u}\left(  t\right)
}f^{\ast}\left(  s\right)  ds+G\left(  \mu_{u}\left(  t\right)  \right)
\left(  -\mu_{u}^{\prime}\left(  t\right)  \right)  , \label{eq_2}%
\end{equation}
where $G$ is the pseudo-rearrangement of $\Phi_{\bullet}\left(  \frac{g\left(
x\right)  }{\varepsilon}\right)  $ with respect to $u$.

Since $\Phi_{\blacklozenge}$ is a strictly monotone function in $\left[
s_{0},+\infty\right[  ,$ we claim that%
\begin{equation}
1\leq\frac{-\mu_{u}^{\prime}\left(  t\right)  }{N\omega_{N}^{1/N}\left(
\mu_{u}\left(  t\right)  \right)  ^{1/N^{\prime}}}\Phi_{\blacklozenge}%
^{-1}\left(  \frac{-\frac{d}{dt}\int_{\left\vert u\right\vert >t}\Phi\left(
\nabla u\right)  dx}{-\mu_{u}^{\prime}\left(  t\right)  }\right)
\label{Cianchi_Dis}%
\end{equation}
for $t>0$, where $s_{0}$ is defined in \eqref{s0}. Indeed, by Jensen
inequality and by $\mu_{u^{\bigstar}}=\mu_{u}$, it follows that
\[
\Phi_{\blacklozenge}\left(  \frac{\frac{1}{\kappa}\int_{\left\{
t<u^{\bigstar}<t+\kappa\right\}  }\left\vert \nabla u^{\bigstar}\right\vert
dx}{\frac{\mu_{u}\left(  t\right)  -\mu_{u}\left(  t+\kappa\right)  }{\kappa}%
}\right)  \leq\frac{\frac{1}{\kappa}\int_{\left\{  t<u^{\bigstar}%
<t+\kappa\right\}  }\Phi_{\blacklozenge}\left(  \left\vert \nabla u^{\bigstar
}\right\vert \right)  dx}{\frac{\mu_{u}\left(  t\right)  -\mu_{u}\left(
t+\kappa\right)  }{\kappa}}%
\]
\noindent for $t,\kappa>0.$ Let us denote by $\Phi_{\blacklozenge}^{-1}$ the
inverse of $\Phi_{\blacklozenge}$ restricted to $[s_{0},+\infty)$. By the
strictly monotonicity of $\Phi_{\blacklozenge}^{-1}$ on $[0,+\infty)$, Coarea
formula and recalling that the level set of $u^{\bigstar}$ are balls, we get%
\[
\frac{1}{\kappa}\int_{t}^{t+\kappa}N\omega_{N}^{1/N}\mu_{u}\left(  r\right)
^{\frac{1}{N^{\prime}}}dr\leq\frac{\mu_{u}\left(  t\right)  -\mu_{u}\left(
t+\kappa\right)  }{\kappa}\Phi_{\blacklozenge}^{-1}\left(  \frac{\frac
{1}{\kappa}\int_{\left\{  t<u^{\bigstar}<t+\kappa\right\}  }\Phi
_{\blacklozenge}\left(  \left\vert \nabla u^{\bigstar}\right\vert \right)
dx}{\frac{\mu_{u}\left(  t\right)  -\mu_{u}\left(  t+\kappa\right)  }{\kappa}%
}\right)  \text{ \ for }t,\kappa>0.
\]
Using again the monotonicity of $\Phi_{\blacklozenge}^{-1}$ and (\ref{A}), we
have%
\[
\frac{1}{\kappa}\int_{t}^{t+\kappa}N\omega_{N}^{1/N}\mu_{u}\left(  r\right)
^{\frac{1}{N^{\prime}}}dr\leq\frac{\mu_{u}\left(  t\right)  -\mu_{u}\left(
t+\kappa\right)  }{\kappa}\Phi_{\blacklozenge}^{-1}\left(  \frac{\frac
{1}{\kappa}\int_{\left\{  t<\left\vert u\right\vert <t+\kappa\right\}  }%
\Phi\left(  \nabla u\right)  dx}{\frac{\mu_{u}\left(  t\right)  -\mu
_{u}\left(  t+\kappa\right)  }{\kappa}}\right)  \text{ \ for }t,\kappa>0.
\]
Letting $\kappa\rightarrow0^{+},$ we obtain \eqref{Cianchi_Dis}. Using
\eqref{Cianchi_Dis} in (\ref{eq_2}), we have%
\begin{gather}
-(1-\varepsilon)\frac{d}{dt}\int_{\left\vert u\right\vert >t}\Phi\left(
\nabla u\right)  dx\leq\frac{-\mu_{u}^{\prime}\left(  t\right)  }{N\omega
_{N}^{1/N}\left(  \mu_{u}\left(  t\right)  \right)  ^{1/N^{\prime}}}%
\Phi_{\blacklozenge}^{-1}\left(  \frac{-\frac{d}{dt}\int_{\left\vert
u\right\vert >t}\Phi\left(  \nabla u\right)  dx}{-\mu_{u}^{\prime}\left(
t\right)  }\right)  \int_{0}^{\mu_{u}\left(  t\right)  }f^{\ast}\left(
s\right)  ds\label{dis}\\
+G\left(  \mu_{u}\left(  t\right)  \right)  \left(  -\mu_{u}^{\prime}\left(
t\right)  \right)  :=I_{1}+I_{2}.\nonumber
\end{gather}
Using inequality (\ref{costante}) with $\digamma=-\Phi_{\blacklozenge}^{-1}$
and Young inequality (\ref{Young}) we get%
\begin{equation}
I_{1}\leq-\mu_{u}^{\prime}\left(  t\right)  \Phi_{\blacklozenge\bullet}\left(
\frac{\int_{0}^{\mu_{u}\left(  t\right)  }f^{\ast}\left(  s\right)
ds}{\varepsilon N\omega_{N}^{1/N}\left(  \mu_{u}\left(  t\right)  \right)
^{1/N^{\prime}}}\right)  -\varepsilon\,\frac{d}{dt}\int_{\left\vert
u\right\vert >t}\Phi\left(  \nabla u\right)  dx. \label{I1}%
\end{equation}
Choosing $0<\varepsilon<\frac{1}{2}$ and using (\ref{I1}), inequality
(\ref{dis}) becomes
\[
-\left(  1-2\,\varepsilon\right)  \frac{d}{dt}\int_{\left\vert u\right\vert
>t}\Phi\left(  \nabla u\right)  dx\leq-\mu_{u}^{\prime}\left(  t\right)
\Phi_{\blacklozenge\bullet}\left(  \frac{\int_{0}^{\mu_{u}\left(  t\right)
}f^{\ast}\left(  s\right)  ds}{\varepsilon N\omega_{N}^{1/N}\left(  \mu
_{u}\left(  t\right)  \right)  ^{1/N^{\prime}}}\right)  +G\left(  \mu
_{u}\left(  t\right)  \right)  \left(  -\mu_{u}^{\prime}\left(  t\right)
\right)  .
\]
Now using (\ref{sublineare}) and (\ref{costante}) we get%
\begin{equation}
-\frac{d}{dt}\int_{\left\vert u\right\vert >t}\Phi\left(  \nabla u\right)
dx\leq-\mu_{u}^{\prime}\left(  t\right)  \Phi_{\blacklozenge\bullet}\left(
\frac{1}{\left(  1-2\,\varepsilon\right)  }\left(  \frac{\int_{0}^{\mu
_{u}\left(  t\right)  }f^{\ast}\left(  s\right)  ds}{\varepsilon N\omega
_{N}^{1/N}\left(  \mu_{u}\left(  t\right)  \right)  ^{1/N^{\prime}}}%
+\Phi_{\blacklozenge\bullet}^{-1}\left(  G\left(  \mu_{u}\left(  t\right)
\right)  \right)  \right)  \right)  , \label{phi}%
\end{equation}
where $\Phi_{\blacklozenge\bullet}^{-1}$\ is the inverse function of Young
conjugate of $\Phi_{\blacklozenge}$ . Then
\begin{equation}
-\frac{d}{dt}\int_{\left\vert u\right\vert >t}\Phi\left(  \nabla u\right)
dx\leq-\mu_{u}^{\prime}\left(  t\right)  \Phi_{\blacklozenge\bullet}\left(
C_{1}\frac{\int_{0}^{\mu_{u}\left(  t\right)  }f^{\ast}\left(  s\right)
ds}{N\omega_{N}^{1/N}\left(  \mu_{u}\left(  t\right)  \right)  ^{1/N^{\prime}%
}}+C_{1}\Phi_{\blacklozenge\bullet}^{-1}\left(  G\left(  \mu_{u}\left(
t\right)  \right)  \right)  \right)  \label{phi2}%
\end{equation}
for some positive constant $C_{1}$. By (\ref{Cianchi_Dis}) and (\ref{phi2}) we
have%
\[
1\leq\frac{-\mu_{u}^{\prime}\left(  t\right)  }{N\omega_{N}^{1/N}\left(
\mu_{u}\left(  t\right)  \right)  ^{1/N^{\prime}}}\Phi_{\blacklozenge}%
^{-1}\left(  \Phi_{\blacklozenge\bullet}\left(  C_{1}\frac{\int_{0}^{\mu
_{u}\left(  t\right)  }f^{\ast}\left(  s\right)  ds}{N\omega_{N}^{1/N}\left(
\mu_{u}\left(  t\right)  \right)  ^{1/N^{\prime}}}+C_{1}\Phi_{\blacklozenge
\bullet}^{-1}\left(  G\left(  \mu_{u}\left(  t\right)  \right)  \right)
\right)  \right)  .
\]
Now using (\ref{dis PsiRombo}), we get%
\begin{equation}
1\leq\frac{-\mu_{u}^{\prime}\left(  t\right)  }{N\omega_{N}^{1/N}\left(
\mu_{u}\left(  t\right)  \right)  ^{1/N^{\prime}}}\Psi_{\blacklozenge}%
^{-1}\left(  C_{1}\frac{\int_{0}^{\mu_{u}\left(  t\right)  }f^{\ast}\left(
s\right)  ds}{N\omega_{N}^{1/N}\left(  \mu_{u}\left(  t\right)  \right)
^{1/N^{\prime}}}+C_{1}\Phi_{\blacklozenge\bullet}^{-1}\left(  G\left(  \mu
_{u}\left(  t\right)  \right)  \right)  \right)  \text{ \ } \label{1<}%
\end{equation}
for a.e. $t>0.$ In a standard way we obtain (\ref{confronto}).

As regards (\ref{conf-grad}), if we integrate (\ref{phi2}), we get%
\[
\int_{\Omega}\Phi\left(  \nabla u\right)  dx\leq\int_{0}^{\left\vert
\Omega\right\vert }\Phi_{\blacklozenge\bullet}\left(  C_{1}\frac
{r^{1/N}f^{\ast\ast}(r)}{N\omega_{N}^{1/N}}+C_{1}\Phi_{\blacklozenge\bullet
}^{-1}\left(  G\left(  r\right)  \right)  \right)  dr.
\]
Then using (\ref{dis PsiRombo}) we have%
\[
\int_{\Omega}\Phi\left(  \nabla u\right)  dx\leq\int_{0}^{\left\vert
\Omega\right\vert }\Phi_{\blacklozenge}\left(  \Psi_{\blacklozenge}%
^{-1}\left(  C_{1}\frac{r^{1/N}f^{\ast\ast}(r)}{N\omega_{N}^{1/N}}+C_{1}%
\Phi_{\blacklozenge\bullet}^{-1}\left(  G\left(  r\right)  \right)  \right)
\right)  dr=\int_{\Omega^{\bigstar}}\Phi_{\blacklozenge}\left(  \left\vert
\nabla v\right\vert \right)  dx,
\]
namely (\ref{conf-grad}).
\end{proof}

\begin{remark}
\emph{In Theorem \ref{th confronto} condition (\ref{cond codominio1}) can be
replaced by (\ref{cond codominio 2}) with the extra hypothesis that }%
$\Phi_{\blacklozenge\bullet}$\emph{ is a Young function. This additional
assumption is due to the presence of the divergence term }$g$\emph{ that
forces us to use Young inequality.}
\end{remark}

\begin{remark}
\emph{In view of equivalence \eqref{equiv}, an analogous result as in Theorem
\ref{th confronto} can be obtained using the symmetric increasing
rearrangement $\Phi_{\bigstar}$ instead of the Klimov rearrangement
$\Phi_{\blacklozenge}$ of $\Phi$. }
\end{remark}

Theorem \ref{th confronto} can be used to prove \textit{a priori} bound for
solutions to problem \eqref{Prob}. We note that norm estimates for $u$ in
\textit{r.i.} space are less easily expressed in terms of \textit{r.i.} norm
of
\begin{equation}
F(r)=\Psi_{\blacklozenge}^{-1}\left(  C_{1}\frac{r^{1/N}f^{\ast\ast}%
(r)}{N\omega_{N}^{1/N}}+C_{1}\Phi_{\blacklozenge\bullet}^{-1}\left(  G\left(
r\right)  \right)  \right)  \text{ \ \ for \ }r>0, \label{F}%
\end{equation}
rather than in terms of norms of $f$ and $g.$ Indeed any \textit{r.i.} norm of
$F$ defines a \textit{r.i.} functional of $f$ and $g$, which is again a norm,
or a quasi norm in several important instances.

When $F$ belongs to the Orlicz space $L^{A}(\Omega)$ for suitable
one-dimensional Young function $A,$ (\ref{confronto}) and \cite[Inequality
2.7]{cianchi 97} yield $u$ belongs to another Orlicz space $L^{A_{n}}%
(\Omega).$ The function $A_{n}$ is the Young function associated with $A$ by
$A_{n}\left(  s\right)  =$ $A(H_{A}^{-1}(\left\vert s\right\vert ))$ with
$s\in\mathbb{R}$, where $H_{A}$ is defined as in (\ref{H}) with $\Phi
_{\blacklozenge}$ replaced by $A.$ \newline We will prove that $u$ enjoys a
stronger summability which is given by the finiteness of the norm in the
Orlicz-Lorentz $X_{A,N}(\Omega).$

\begin{proposition}
\label{prop_3.7} Under the same assumptions as in Theorem \ref{th confronto},
let $u$ be a weak solution to problem \eqref{Prob}. Assume that the function
$F$, defined by \eqref{F}, belongs to $L^{A}(0,|\Omega|)$ for some
one-dimensional Young function $A$ satisfying \eqref{int_A}.

\noindent(i) If $\displaystyle\int^{\infty}\left(  \frac{s}{A(s)}\right)
^{\frac{1}{N-1}}\,ds=+\infty$, then
\begin{equation}
\Vert u\Vert_{X_{A,N}(\Omega)}\leq c\Vert F\Vert_{L^{A}(0, |\Omega|)}
\label{stima_X_An}%
\end{equation}
for some positive constant $c$ independent of $u$, $f$ and $g_{i}$ for $i=1,
\ldots, N$, where $\Vert\cdot\Vert_{X_{A,N}(\Omega)}$ is defined in
(\ref{O-L_norm}).

\noindent(ii) If $\displaystyle\int^{\infty}\left(  \frac{s}{A(s)}\right)
^{\frac{1}{N-1}}\,ds<+\infty$, then
\begin{equation}
\Vert u\Vert_{L^{\infty}(\Omega)}\leq c\left(  \int_{0}^{\infty}%
\frac{A_{\bullet}(s)}{s^{N^{\prime}+1}}\;ds\right)  ^{\frac{1}{N^{\prime}}%
}\Vert F\Vert_{L^{A}(0, |\Omega|)} \label{stima_X_An}%
\end{equation}
for some positive constant $c$ independent of $u$, $f$ and $g_{i}$ for $i=1,
\ldots, N$.
\end{proposition}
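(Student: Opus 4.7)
The plan is to reduce everything to a one-dimensional estimate for a weighted Hardy-type operator acting on $F$. Passing from (\ref{confronto})--(\ref{v}) in Theorem~\ref{th confronto} to decreasing rearrangements, and using that $v$ is radially decreasing, we obtain the pointwise bound
\begin{equation*}
u^{\ast}(s)\;\leq\;\frac{1}{N\omega_{N}^{1/N}}\int_{s}^{|\Omega|}r^{-1/N^{\prime}}F(r)\,dr\qquad\text{for a.e. }s\in(0,|\Omega|),
\end{equation*}
where $F$ is as in (\ref{F}). From here on, $u$ enters only through the right-hand side, so both parts of the proposition follow if we can control the weighted Hardy operator $TF(s):=s^{-1/N}\int_{s}^{|\Omega|}r^{-1/N^{\prime}}F(r)\,dr$ in the appropriate norms.

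For part (i), by the very definition (\ref{O-L_norm}) of the Orlicz--Lorentz norm, the statement $\|u\|_{X_{A,N}(\Omega)}\leq c\|F\|_{L^{A}(0,|\Omega|)}$ is equivalent to the one-dimensional inequality $\|TF\|_{L^{B}(0,|\Omega|)}\leq c\|F\|_{L^{A}(0,|\Omega|)}$. The Young function $B$ constructed in (\ref{b}) is tailored exactly so that this Hardy-type estimate holds under the assumption $\int^{\infty}(s/A(s))^{1/(N-1)}\,ds=+\infty$; this is the content of the optimal Orlicz--Lorentz target for the weighted Hardy operator that underlies the anisotropic Orlicz--Sobolev embedding of \cite{cianchi immersione}. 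I would therefore invoke that sharp inequality directly, having already verified the hypothesis (\ref{int_A}) needed for the definition of $B$ to be consistent.

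For part (ii), the condition $\int^{\infty}(s/A(s))^{1/(N-1)}\,ds<+\infty$ means that the target $B$ of part (i) degenerates; instead the rearrangement is bounded. Starting from
\begin{equation*}
\|u\|_{L^{\infty}(\Omega)}=u^{\ast}(0^{+})\leq \frac{1}{N\omega_{N}^{1/N}}\int_{0}^{|\Omega|}r^{-1/N^{\prime}}F(r)\,dr,
\end{equation*}
I would apply the generalized H\"older inequality in Orlicz spaces, $\int F\cdot w\leq 2\|F\|_{L^{A}}\|w\|_{L^{A_{\bullet}}}$, with $w(r)=r^{-1/N^{\prime}}$, and then evaluate the Luxemburg norm of $w$. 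Using (\ref{L^A_norm}) together with the substitution $\sigma=1/(k r^{1/N^{\prime}})$ in the defining integral $\int_{0}^{|\Omega|}A_{\bullet}(1/(kr^{1/N^{\prime}}))\,dr$, one obtains (up to a constant depending only on $N$) that $\|r^{-1/N^{\prime}}\|_{L^{A_{\bullet}}(0,|\Omega|)}\leq c\bigl(\int_{0}^{\infty}A_{\bullet}(s)\,s^{-N^{\prime}-1}\,ds\bigr)^{1/N^{\prime}}$, after an optimal choice of $k$. Combining these estimates yields (\ref{stima_X_An}).

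I expect the main obstacle to lie in part (i): namely, rigorously identifying the Young function $B$ of (\ref{b}) as the sharp target for the Hardy operator $T$ in the Orlicz setting and producing the explicit constant. This is a nontrivial one-dimensional calculation that proceeds either by appealing to the Cianchi--Orlicz--Sobolev machinery of \cite{cianchi immersione}, or, if one prefers a self-contained argument, by applying Jensen's inequality inside (\ref{L^A_norm}) and exploiting the precise relation between $a^{-1}$ and $b^{-1}$ in (\ref{b}). The remaining steps---the reduction to $TF$ via the comparison principle, and the one-point H\"older estimate in part (ii)---are, by contrast, essentially automatic.
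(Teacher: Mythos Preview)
Your proposal is correct and follows exactly the route the paper intends: the paper does not give an explicit proof but simply remarks that Proposition~\ref{prop_3.7} is the analogue of Proposition~3.7 in \cite{cianchi anisotropo}, with $F$ now containing the additional divergence-term contribution. Your reduction via the comparison (\ref{confronto})--(\ref{v}) to the one-dimensional Hardy operator $TF$, followed by the Orlicz--Sobolev/Hardy machinery of \cite{cianchi immersione} in case~(i) and the Orlicz H\"older inequality in case~(ii), is precisely that argument.
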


Proposition \ref{prop_3.7} is the analogous version of Proposition 3.7 in
\cite{cianchi anisotropo}, but the function $F$, defined by \eqref{F},
contains not only a part related to $f$ as in \cite{cianchi anisotropo}, but
also one related to $g_{i}$ for $i=1,\ldots,N$.

\bigskip

\section{ Applications and Examples}

\subsection{Example 1}

Let us consider
\begin{equation}
\Phi\left(  \xi\right)  =\underset{i=1}{\overset{N}{%
{\displaystyle\sum}
}}\lambda_{i}\left\vert \xi_{i}\right\vert ^{p_{i}}\text{ \ \ \ for }\xi
\in\mathbb{R}^{N}, \label{fi pi}%
\end{equation}
with $\lambda_{i}>0$ and $p_{i}>1$ for any $i\in\left\{  1,\ldots,N\right\}
$. More precisely we consider the following class of problems
\begin{equation}
\left\{
\begin{array}
[c]{lll}%
Lu:=-\operatorname{div}(a(x,u,\nabla u))=f-\underset{i=1}{\overset{N}{%
{\displaystyle\sum}
}}\dfrac{\partial}{\partial x_{i}}g_{i}\left(  x\right)  &  & \text{in }%
\Omega\\
u=0 &  & \text{on }\partial\Omega,
\end{array}
\right.  \label{problema}%
\end{equation}
where $\Omega$ is a bounded open subset of $\mathbb{R}^{N}$ with $N\geq2,$
$a:\Omega\times\mathbb{R}\times\mathbb{R}^{N}\rightarrow\mathbb{R}^{N}$ is
Carath\'{e}odory function such that every component $a_{j}(x,s,\xi)$ of $a$
fulfills
\begin{equation}
a(x,s,\xi)\cdot\xi\geq\overset{N}{\underset{i=1}{\sum}}\lambda_{i}\left\vert
\xi_{i}\right\vert ^{p_{i}}\ \ , \label{ellitticita}%
\end{equation}%
\begin{equation}
\left\vert a_{j}(x,s,\xi)\right\vert \leq\beta\left[  h(x)+\left\vert
s\right\vert ^{\overline{p}}+\overset{N}{\underset{i=1}{\sum}}\left\vert
\xi_{i}\right\vert ^{p_{i}}\right]  ^{\frac{1}{p_{j}^{\prime}}}\text{ \ \ with
}\beta>0,h\in L^{1}(\Omega), \label{crescita a}%
\end{equation}%
\begin{equation}
\left[  a(x,s,\xi)-a(x,s,\xi^{\prime})\right]  \cdot\left(  \xi-\xi^{\prime
}\right)  >0\text{ \ for }\xi\neq\xi^{\prime} \label{monotonia}%
\end{equation}
and $\ \ $%
\begin{equation}
f\in L^{\left(  \overline{p}^{\ast}\right)  ^{\prime},\overline{p}^{\prime}%
}(\Omega)\text{ and }g_{i}\in L^{p_{i}^{\prime}}(\Omega)\text{ for }%
i=1,\ldots,N, \label{dati f e g}%
\end{equation}
where $\overline{p}$ is the harmonic mean of $p_{1},\ldots,p_{N}$ given by
(\ref{p barrato bis}) such that
\begin{equation}
\overline{p}<N. \label{p barrato}%
\end{equation}

In this section we are interested to study the existence and regularity of
solutions to problem (\ref{problema}), improving results contained in
\cite{castro} and putting the data in the Lorentz spaces. As regards the
uniqueness we refer to \textit{e.g.} \cite{antontsev-chipot-08},
\cite{DiNardo-Feo-Guibe}, \cite{DiNardo-Feo} and the bibliography therein.

The natural space into consider the solutions to problem (\ref{problema}) is
the anisotropic Sobolev space $W_{0}^{1,\overrightarrow{p}}(\Omega)$, that we
define as the closure of $C_{0}^{\infty}(\Omega)$ with respect to the norm
$\left\Vert u\right\Vert _{W_{0}^{1,\overrightarrow{p}}(\Omega)}=\overset
{N}{\underset{i=1}{\sum}}\left\Vert \partial_{x_{i}}u\right\Vert _{L^{p_{i}%
}(\Omega)}.$ Here, $\overrightarrow{p}$ stands for $(p_{1},\ldots,p_{N})$.

\begin{definition}
A weak solution (resp. distributional solution) to problem (\ref{problema}) is
a function $u\in W_{0}^{1,\overrightarrow{p}}(\Omega)$ such that (resp. $u\in
W_{0}^{1,1}(\Omega)$ such that $a(x,u,\nabla u)\in L^{1}\left(  \Omega\right)
$ and)
\begin{equation}
\int_{\Omega}a(x,u,\nabla u)\nabla\varphi\,dx=\int_{\Omega}\left(
f\varphi+\underset{i=1}{\overset{N}{%
{\displaystyle\sum}
}}g_{i}\left(  x\right)  \varphi_{x_{i}}\right)  dx\text{ \ \ \ \ }%
\forall\varphi\in\mathcal{D}(\Omega). \label{sol debole}%
\end{equation}

\end{definition}

By definition (\ref{fi rombo}) of $\Phi_{\blacklozenge},$ condition
(\ref{p barrato}) corresponds to require (\ref{existence_2}). If $\overline
{p}<N,$ then there is a continuous embedding of Sobolev space $W_{0}%
^{1,\overrightarrow{p}}(\Omega)$ into $L^{q}(\Omega)$ for $q\in\left[
1,\max(\overline{p}^{\ast},p_{N})\right]  $. Otherwise if $\overline{p}\geq N$
there is a continuous embedding $W_{0}^{1,\overrightarrow{p}}(\Omega)\subset
L^{q}(\Omega)$ for $q\in\left[  1,+\infty\right[  .$ \bigskip

When $\Phi\left(  \xi\right)  $ is given by (\ref{fi pi}), Theorem
\ref{th confronto} ensure the following result.

\begin{corollary}
\label{corollario}Let $\Omega$ be a bounded open subset of $\mathbb{R}^{N}$
with $N\geq2$ and suppose conditions (\ref{ellitticita})-(\ref{p barrato})
hold. If u is a weak solution to problem (\ref{problema}), there exists a
positive constant $C$ such that
\begin{equation}
u^{\bigstar}\left(  x\right)  \leq C v\left(  x\right)  \text{ \ \ \ for a.e.
\ }x\in\Omega^{\bigstar}, \label{confronto_anisotropo}%
\end{equation}
where $v\left(  x\right)  $ is the spherically symmetric solution to problem
(\ref{Prob simm pi}) given by
\begin{equation}
v\left(  x\right)  =\int_{\omega_{N|}x|^{N}}^{\left\vert \Omega\right\vert
}\frac{1}{\Lambda^{\frac{1}{\overline{p}-1}}N\omega_{N}^{1/N}t^{1/N^{\prime}}%
}\left(  \frac{t^{1/N}f^{\ast\ast}(t)}{N\omega_{N}^{1/N}}+\left(  \overline
{p}\Lambda\right)  ^{1/\overline{p}}\left(  \overline{p}^{\prime}G\left(
t\right)  \right)  ^{1/\overline{p}^{\prime}}\right)  ^{\frac{1}{\overline
{p}-1}}\!dt\qquad\hbox{\rm for}\;x\in\Omega^{\bigstar},
\label{soluzione_anisotropo}%
\end{equation}
where $G$ is the pseudo-rearrangement of $\overset{N}{\underset{i=1}{%
{\displaystyle\sum}
}}$ $\dfrac{(g_{i})^{p_{i}^{\prime}}} {p_{i}^{\prime}\;\;(\lambda_{i}%
p_{i})^{p_{i}^{\prime}/p_{i}}}$ with respect to $u$ and $\Lambda$ is defined
in (\ref{lapda}). \noindent Moreover we have%
\begin{equation}
\overset{N}{\underset{i=1}{%
{\displaystyle\sum}
}}\int_{\Omega}\left\vert \frac{\partial u}{\partial x_{i}}\right\vert
^{p_{i}}dx\leq C\,\int_{\Omega^{\bigstar}}\left\vert \nabla v\right\vert
^{\overline{p}}dx. \label{stima grad anisotropo}%
\end{equation}

\end{corollary}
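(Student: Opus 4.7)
The plan is to specialize Theorem \ref{th confronto} to the Young function $\Phi(\xi) = \sum_{i=1}^N \lambda_i |\xi_i|^{p_i}$, exploiting the fact that by (\ref{fi rombo}) all the symmetrized quantities appearing in the theorem become explicit powers. First I would verify the hypotheses of Theorem \ref{th confronto}. Since each $p_i>1$, $\Phi$ is an $N$-dimensional Young function satisfying (\ref{lim}). By (\ref{fi rombo}), $\Phi_\blacklozenge(s) = \Lambda s^{\overline{p}}$ with $\Lambda$ given in (\ref{lapda}); consequently $\Phi_\blacklozenge(s)/s \to 0$ as $s\to 0^+$, giving (\ref{psi incrising}), and $\Psi_\blacklozenge(r) = \Lambda r^{\overline{p}-1} \to +\infty$, giving (\ref{cond codominio1}). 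The ellipticity (\ref{elliti}) follows from (\ref{ellitticita}). Since $\Phi_{\blacklozenge\bullet}(r) = r^{\overline{p}^{\prime}}/(\overline{p}^{\prime}(\Lambda \overline{p})^{\overline{p}^{\prime}/\overline{p}})$, the Orlicz space $L^{\Phi_{\blacklozenge\bullet}}$ coincides with $L^{\overline{p}^{\prime}}$; Hardy's inequality shows that $f\in L^{(\overline{p}^{\ast})^{\prime},\overline{p}^{\prime}}(\Omega)$ is equivalent to $s^{1/N}f^{\ast\ast}(s)\in L^{\overline{p}^{\prime}}(0,|\Omega|)$, while $g_i \in L^{p_i^{\prime}}(\Omega)$ combined with $\Phi_\bullet(\eta) = \sum_i |\eta_i|^{p_i^{\prime}}/(p_i^{\prime}(\lambda_i p_i)^{p_i^{\prime}/p_i})$ gives $\int_\Omega \Phi_\bullet(g)\,dx < \infty$, completing (\ref{dati}). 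Condition (\ref{grad-finito2}) then reduces to an explicit polynomial integral whose convergence follows from $(a+b)^{\overline{p}^{\prime}} \lesssim a^{\overline{p}^{\prime}}+b^{\overline{p}^{\prime}}$ together with the same data hypotheses.

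Next I would substitute the explicit inverses $\Psi_\blacklozenge^{-1}(s) = (s/\Lambda)^{1/(\overline{p}-1)}$ and $\Phi_{\blacklozenge\bullet}^{-1}(s) = (\Lambda \overline{p})^{1/\overline{p}}(\overline{p}^{\prime} s)^{1/\overline{p}^{\prime}}$ into formula (\ref{v}). Since $\Psi_\blacklozenge^{-1}$ is a pure power, the constant $C_1$ factors out of the bracket as $C_1^{1/(\overline{p}-1)}$; the constant $C_2$ inside the pseudo-rearrangement of $\Phi_\bullet(C_2 g)$ is absorbed via linearity of the pseudo-rearrangement and the bound $\Phi_\bullet(C_2 g) \leq C_2^{\max_i p_i^{\prime}} \Phi_\bullet(g)$ for $C_2 \geq 1$. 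Collecting all numerical factors into a single constant $C$ reproduces exactly (\ref{soluzione_anisotropo}), whereupon (\ref{confronto_anisotropo}) is immediate from (\ref{confronto}). For the gradient bound (\ref{stima grad anisotropo}), I would apply (\ref{conf-grad}) directly: its left-hand side is $\sum_i \lambda_i \int_\Omega |u_{x_i}|^{p_i}\,dx$ and its right-hand side is $\Lambda \int_{\Omega^{\bigstar}}|\nabla v|^{\overline{p}}\,dx$, so dividing by $\min_i \lambda_i$ yields (\ref{stima grad anisotropo}) with $C = \Lambda/\min_i \lambda_i$.

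The main delicate point is the initial verification of the data condition (\ref{dati}) starting from the Lorentz hypothesis $f\in L^{(\overline{p}^{\ast})^{\prime},\overline{p}^{\prime}}(\Omega)$ and the accompanying check of (\ref{grad-finito2}); once the Hardy-type computations are in place, the remainder is routine explicit computation and careful bookkeeping of the numerical factors (the constant $\Lambda$, the constants $C_1, C_2$ of Theorem \ref{th confronto}, and the coefficients in $\Phi_\bullet$) into the single final constant $C$.
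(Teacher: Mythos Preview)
Your proposal is correct and follows exactly the route the paper intends: the paper presents this corollary with no separate proof, stating only that ``Theorem \ref{th confronto} ensure the following result'' when $\Phi$ is given by (\ref{fi pi}), and you carry out precisely that specialization, verifying the hypotheses and plugging the explicit power expressions for $\Phi_{\blacklozenge}$, $\Psi_{\blacklozenge}^{-1}$, $\Phi_{\blacklozenge\bullet}^{-1}$ and $\Phi_{\bullet}$ into (\ref{v}) and (\ref{conf-grad}). Your handling of the constants $C_1,C_2$ via the homogeneity of the power functions and the linearity of the pseudo-rearrangement is the right way to collapse everything into the single constant $C$ of the statement.
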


\medskip

\noindent

\begin{remark}
\emph{The thesis of Corollary \ref{corollario} holds also if $p_{i}\geq1$ for
$i=1,\ldots,N$ and $\overline{p}>1.$ }
\end{remark}

We emphasize that the \textit{a priori} estimates (\ref{confronto_anisotropo})
allows us to obtain existence and regularity results only if the anisotropy is
concentrated, \textit{i.e.}%
\begin{equation}
\max_{i=1,\ldots,N} p_{i}<\overline{p}^{\ast}. \label{cond pN}%
\end{equation}
Indeed Corollary \ref{corollario} compares a solution to anisotropic problem
(\ref{problema}) with the solution to isotropic problem (\ref{Prob simm pi}),
but condition $\max_{i} p_{i}\geq\overline{p}^{\ast}$ does not occur if the
operator is of an isotropic type as in problem (\ref{Prob simm pi}).

The existence of a weak solution $u$ to problem (\ref{problema}) follows by
the classical theory on Leray-Lions operator (see \textit{e.g.} \cite{lions})
and conditions in (\ref{dati f e g}) on data are enough to have $\left\Vert
u\right\Vert _{W_{0}^{1,\overrightarrow{p}}(\Omega)}<+\infty.$ Indeed, by
inequality (\ref{stima grad anisotropo}), up to easy computation based on some
inequalities contained in Proposition \ref{Dis Hardy} above, we get the
following \textit{a priori} estimate
\[
\overset{N}{\underset{i=1}{%
{\displaystyle\sum}
}}\left\Vert \frac{\partial u}{\partial x_{i}}\right\Vert _{L^{p_{i}}(\Omega
)}^{p_{i}}\leq c\overset{N}{\underset{i=1}{%
{\displaystyle\sum}
}}\left(  \left\Vert f\right\Vert _{L^{\left(  \overline{p}^{\ast}\right)
^{\prime},\overline{p}^{\prime}}(\Omega)}^{\overline{p}^{\prime}}+\overset
{N}{\underset{j=1}{%
{\displaystyle\sum}
}}\left\Vert g_{j}\right\Vert _{L^{p_{j}^{\prime}}(\Omega)}^{p_{j}^{\prime}%
}\right)  ^{p_{i}},
\]
for some positive constant $c.$

Now, using (\ref{confronto_anisotropo}) we investigate how the summability of
a weak solution $u$ of problem (\ref{problema}) varies by improving the
summability of the data in the Lorentz-Zygmund spaces.

\begin{proposition}
\label{soluzioni deboli}Let us suppose (\ref{ellitticita})-(\ref{monotonia}%
),(\ref{p barrato}) and (\ref{cond pN}) hold.

i) If $f\in L^{m,\frac{\sigma}{\overline{p}-1}}(\Omega)$ and g$_{i}\in
L^{r_{i},s_{i}}(\Omega)$ for $i=1,\ldots,N$ with
\[
\left\{
\begin{array}
[c]{ll}%
m>N/{\overline{p}} & \text{ \ and \ }0<\sigma\leq+\infty\\
& or\\
m=N/{\overline{p}} & \text{ and \ \ }0<\sigma\leq1
\end{array}
\right.  \text{ \ \ and \ \ }\left\{
\begin{array}
[c]{ll}%
r_{i}>Np_{i}^{\prime}/\overline{p} & \text{ \ \ and \ \ }0<s_{i}\leq+\infty\\
& or\\
r_{i}=Np_{i}^{\prime}/\overline{p} & \text{ \ \ and \ \ \ }0<s_{i}\leq
p_{i}^{\prime}/\overline{p},
\end{array}
\right.
\]
then there exists at least one bounded weak solution $u$ to problem
(\ref{problema}), such that%
\[
\left\Vert u\right\Vert _{L^{\infty}(\Omega)}\leq c\left(  \left\Vert
f\right\Vert _{L^{m,\frac{\sigma}{\overline{p}-1}}(\Omega)}^{\frac
{1}{\overline{p}-1}}+\overset{N}{\underset{i=1}{%
{\displaystyle\sum}
}}\left\Vert g_{i}\right\Vert _{L^{\frac{Np_{i}^{\prime}}{\overline{p}}%
,\frac{p_{i}^{\prime}}{\overline{p}}}(\Omega)}^{\frac{p_{i}^{\prime}%
}{\overline{p}}}\right)  .
\]

ii) If $f\in L^{\frac{N}{\overline{p}},\frac{\sigma}{\overline{p}-1}}(\Omega)$
and $g_{i}\in L^{\frac{Np_{i}^{\prime}}{\overline{p}},\frac{\sigma
p_{i}^{\prime}}{\overline{p}}}(\Omega)$ for $i=1,\ldots,N$ with $1<\sigma
\leq+\infty,$ then there exists at least one weak solution $u$ to problem
(\ref{problema}) belonging to $L^{\infty,\sigma}(\log L)^{-1}(\Omega)$, such
that%
\[
\left\Vert u\right\Vert _{L^{\infty,\sigma}(\log L)^{-1}(\Omega)}\leq c\left(
\left\Vert f\right\Vert _{L^{\frac{N}{\overline{p}},\frac{\sigma}{\overline
{p}-1}}(\Omega)}^{\frac{1}{\overline{p}-1}}+\overset{N}{\underset{i=1}{%
{\displaystyle\sum}
}}\left\Vert g_{i}\right\Vert _{L^{\frac{Np_{i}^{\prime}}{\overline{p}}%
,\frac{\sigma p_{i}^{\prime}}{\overline{p}}}(\Omega)}^{\frac{p_{i}^{\prime
}\sigma}{\overline{p}}}\right)  .
\]

iii) If $f\in L^{m,\frac{\sigma}{\overline{p}-1}}(\Omega)$ and g$_{i}\in
L^{^{\frac{mN\left(  \overline{p}-1\right)  }{N-m}\frac{p_{i}^{\prime}%
}{\overline{p}},\frac{\sigma p_{i}^{\prime}}{\overline{p}}}}(\Omega)$ \ for
$i=1,\ldots,N$ with either $\left(  \overline{p}^{\ast}\right)  ^{\prime
}<m<N/\overline{p}$ and $0<\sigma\leq+\infty$ or $m=\left(  \overline{p}%
^{\ast}\right)  ^{\prime}$ and $\sigma=\overline{p},$ then there exists at
least one weak solution $u$ to problem (\ref{problema}) belonging to
$L^{\frac{mN\left(  \overline{p}-1\right)  }{N-m\overline{p}},\sigma}%
(\Omega),$ such that%
\[
\left\Vert u\right\Vert _{L^{\frac{mN\left(  \overline{p}-1\right)
}{N-m\overline{p}},\sigma}(\Omega)}\leq c\left(  \left\Vert f\right\Vert
_{L^{m,\frac{\sigma}{\overline{p}-1}}(\Omega)}^{\frac{1}{\overline{p}-1}%
}+\overset{N}{\underset{i=1}{%
{\displaystyle\sum}
}}\left\Vert g_{i}\right\Vert _{L^{^{\frac{mN\left(  \overline{p}-1\right)
}{N-m}\frac{p_{i}^{\prime}}{\overline{p}},\frac{\sigma p_{i}^{\prime}%
}{\overline{p}}}}(\Omega)}^{\frac{\sigma p_{i}^{\prime}}{\overline{p}}%
}\right)  .
\]

In all cases c is a positive constant independent of u, f and $g_{i}$ for
$i=1,\ldots, N.$
\end{proposition}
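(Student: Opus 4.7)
The plan is to reduce the statement to one-dimensional weighted integral estimates via Corollary \ref{corollario} and then invoke Hardy-type inequalities in Lorentz (and Lorentz-Zygmund) spaces, which are available in the paper as Proposition \ref{Dis Hardy}. First I would exploit the comparison $u^{\bigstar}(x)\leq C\,v(x)$ and the explicit formula \eqref{soluzione_anisotropo} to get the pointwise bound
\[
u^{*}(s)\leq c\int_{s}^{|\Omega|}\frac{dt}{t^{1/N'}}\left[\bigl(t^{1/N}f^{**}(t)\bigr)^{1/(\overline{p}-1)}+G(t)^{1/\overline{p}}\right]\qquad\text{for a.e.\ }s\in(0,|\Omega|),
\]
where I have used the identity $(\overline{p}-1)\overline{p}'=\overline{p}$ to rewrite the $g$-contribution and the elementary convexity inequality $(a+b)^{1/(\overline{p}-1)}\leq c(a^{1/(\overline{p}-1)}+b^{1/(\overline{p}-1)})$ to decouple the $f$ and $g$ parts.

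Next, I would control $G$ in Lorentz spaces by the data. Since $G$ is the pseudo-rearrangement with respect to $u$ of a positive combination of the $g_i^{p_i'}$, estimate \eqref{norm_pseudo} together with the elementary identity $\|h^{p_i'}\|_{L^{\rho,\tau}}=\|h\|_{L^{\rho p_i',\tau p_i'}}^{p_i'}$ yields $\|G\|_{L^{\rho,\tau}}\leq c\sum_i\|g_i\|_{L^{\rho p_i',\tau p_i'}}^{p_i'}$ for any admissible pair $(\rho,\tau)$; the analogous identity for powers gives $\|(f^{**})^{1/(\overline{p}-1)}\|_{L^{m(\overline{p}-1),\sigma}}\leq c\|f\|_{L^{m,\sigma/(\overline{p}-1)}}^{1/(\overline{p}-1)}$ after combining with the classical Hardy-Lorentz bound $\|f^{**}\|_{L^{m,q}}\leq c\|f\|_{L^{m,q}}$ valid for $m>1$. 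Then I would apply the Lorentz-weighted Hardy inequalities from Proposition \ref{Dis Hardy} to the two integrals: one with weight $t^{1/(N(\overline{p}-1))-1/N'}$ applied to $(f^{**})^{1/(\overline{p}-1)}$, and one with weight $t^{-1/N'}$ applied to $G^{1/\overline{p}}$. These are fractional-integral-type operators whose mapping properties between Lorentz spaces determine the three regimes of the statement.

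The three cases correspond to the sub-, critical-, and super-critical regimes of these weighted Hardy inequalities on $(s,|\Omega|)$. In case (i), when $m>N/\overline{p}$ (or $m=N/\overline{p}$ with $\sigma\leq 1$), the Hardy integral is uniformly bounded in $s$; the restrictions $r_i>Np_i'/\overline{p}$, or the critical $r_i=Np_i'/\overline{p}$ with $s_i\leq p_i'/\overline{p}$, are exactly those that render the $G$-weighted Hardy integral bounded, giving $\|u\|_{L^\infty}\leq c\bigl(\|f\|^{1/(\overline{p}-1)}+\sum_i\|g_i\|^{p_i'/\overline{p}}\bigr)$. In case (ii), at the borderline $m=N/\overline{p}$ with $\sigma>1$, the Hardy inequality degenerates only logarithmically, producing an $L^{\infty,\sigma}(\log L)^{-1}$ estimate with second index $\sigma$ inherited from the data. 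In case (iii), for $(\overline{p}^*)'<m<N/\overline{p}$, the weight $t^{-1/N'}$ acts in the Lorentz scale as a fractional integral of order $1/N$, sending $(f^{**})^{1/(\overline{p}-1)}\in L^{m(\overline{p}-1),\sigma}$ into $L^{mN(\overline{p}-1)/(N-m\overline{p}),\sigma}$; matching the $G$-side to the same target through the formula $\|G\|_{L^{\rho,\tau}}\leq c\sum\|g_i\|_{L^{\rho p_i',\tau p_i'}}^{p_i'}$ produces precisely the stated exponents $\frac{mN(\overline{p}-1)}{N-m}\cdot\frac{p_i'}{\overline{p}}$ and $\sigma p_i'/\overline{p}$ on $g_i$. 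The boundary case $m=(\overline{p}^*)'$, $\sigma=\overline{p}$ recovers the natural energy space. Existence in each case follows from the classical Leray--Lions framework once it is checked that the Lorentz hypotheses embed into \eqref{dati f e g}.

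The main technical burden is the careful bookkeeping of Lorentz exponents through the three successive operations---raising to the power $1/(\overline{p}-1)$ or $1/\overline{p}$, passage to the pseudo-rearrangement $G$, and application of the weighted Hardy inequality---and the verification that the specific second indices $\sigma/(\overline{p}-1)$ on $f$ and $\sigma p_i'/\overline{p}$ on $g_i$ are the ones that let the Hardy inequality close at the declared target space. The critical case (ii), where the logarithmic weight must be tracked through the Lorentz-Zygmund scale, is the most delicate point.
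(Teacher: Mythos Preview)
Your proposal is correct and follows essentially the same route as the paper: obtain the pointwise bound on $u^{*}$ from Corollary~\ref{corollario}, decouple the $f$-term and the $g$-term, and close each via the weighted Hardy inequalities of Proposition~\ref{Dis Hardy} together with the pseudo-rearrangement estimate \eqref{norm_pseudo}. The only step the paper makes explicit that you leave implicit is the replacement of $G$ by its decreasing rearrangement $G^{*}$ (via Hardy--Littlewood) before invoking \eqref{Hardy1}--\eqref{Hardy2}, which is what secures the monotonicity hypothesis of Proposition~\ref{Dis Hardy}(i) in the range $0<\sigma<1$.
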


\begin{remark}
\emph{We observe that the summability conditions given in Proposition
\ref{soluzioni deboli} are weaker than the conditions required in Proposition
\ref{prop_3.7}. More precisely, by Proposition \ref{prop_3.7} with
}$A(t)=t^{q}$\emph{ for }$1\leq q\leq N$\emph{, it follows that }%
\begin{equation}
\left\Vert u\right\Vert _{L^{q^{\ast},q}(\Omega)}\leq c\left\Vert F\right\Vert
_{L^{q}(0,\left\vert \Omega\right\vert )}\leq c\left(  \left\Vert f\right\Vert
_{L^{\frac{Nq}{q+N(\overline{p}-1)},\frac{q}{\overline{p}-1}}(\Omega
)}+\overset{N}{\underset{i=1}{{\displaystyle\sum}}}\left\Vert g_{i}\right\Vert
_{L^{\frac{p_{i}^{\prime}q}{\overline{p}}}(\Omega)}^{\frac{p_{i}^{\prime}%
q}{\overline{p}}}\right)  \label{stima}%
\end{equation}
\emph{for some positive constant }$c$\emph{. This result is not sharp in the
class of Lorentz-Zygmund spaces. Indeed, if for example }$q=\overline{p}%
$\emph{, then (\ref{stima}) gives that }$u\in L^{\overline{p}^{\ast}%
,\overline{p}}(\Omega)$\emph{ when (\ref{dati f e g}) is in force, whereas
Proposition \ref{soluzioni deboli} assures that }$u\in L^{\overline{p}^{\ast}%
}(\Omega)\subset L^{\overline{p}^{\ast},\overline{p}}(\Omega).$
\end{remark}

\bigskip

\begin{proof}
[Proof of Proposition \ref{soluzioni deboli}]By Corollary \ref{corollario} and
by Hardy--Littewood inequality we have%
\begin{align}
u^{\ast}\left(  s\right)   &  \leq c\left[
{\displaystyle\int_{s}^{\left\vert \Omega\right\vert }}
\frac{1}{N\omega_{N}^{1/N}t^{1/N^{\prime}}}\left(  \frac{t^{1/N}f^{\ast\ast
}(t)}{N\omega_{N}^{1/N}}\right)  ^{\overline{p}^{\prime}/\overline{p}}dt+%
{\displaystyle\int_{s}^{\left\vert \Omega\right\vert }}
\frac{\left(  G^{\ast}(t)\right)  ^{1/\overline{p}}}{N\omega_{N}%
^{1/N}t^{1/N^{\prime}}}dt\right] \nonumber\label{v1v2}\\
&  =:c\left[  v_{1}\left(  s\right)  +v_{2}\left(  s\right)  \right]
\end{align}
for some positive constant $c$, which can be vary from line to line.
Inequality \eqref{v1v2} implies
\[
\left\Vert u\right\Vert _{X}\leq c\left[  \left\Vert v_{1}\right\Vert
_{X}+\left\Vert v_{2}\right\Vert _{X}\right]  ,
\]
where $X$ is an appropriate Lorentz-Zygmund space which varies with respect to
the summability of data. As regard the first norm of the right-hand side
$\left\Vert v_{1}\right\Vert _{X},$ the thesis follows by Proposition 3.8 of
\cite{cianchi anisotropo}. Then in what follows we only take into account
$v_{2}$. For convenience of the reader here we exhibit only the proof in the
case \textit{(iii) }for $\left(  \overline{p}^{\ast}\right)  ^{\prime
}<m<N/\overline{p}$ and $0<\sigma<+\infty$.

By (\ref{Hardy2}) \ for increasing function and by (\ref{Hardy1}) for
decreasing function, it follows that
\begin{align*}
\left\Vert v_{2}\right\Vert _{L^{\frac{mN\left(  \overline{p}-1\right)
}{N-m\overline{p}},\sigma}}^{\sigma}  &  \leq c\int_{0}^{\left\vert
\Omega\right\vert }\left[  s^{\frac{N-m\overline{p}}{mN\left(  \overline
{p}-1\right)  }}\int_{s}^{\left\vert \Omega\right\vert }t^{-\frac{1}%
{N^{\prime}}}\left(  G^{^{\ast}}(t)\right)  ^{\frac{1}{\overline{p}}%
}dt\right]  ^{\sigma}\frac{ds}{s}\text{ \ }\\
&  \leq c\int_{0}^{\left\vert \Omega\right\vert }\left[  s^{\frac
{N-m\overline{p}}{mN\left(  \overline{p}-1\right)  }}\int_{s}^{\left\vert
\Omega\right\vert }t^{-\frac{1}{N^{\prime}}-\frac{1}{\overline{p}}}\left(
\int_{0}^{t}G^{^{\ast}}(\tau)d\tau\right)  ^{\frac{1}{\overline{p}}}dt\right]
^{\sigma}\frac{ds}{s}\\
&  \leq c\int_{0}^{\left\vert \Omega\right\vert }\left[  s^{\frac
{N-m\overline{p}}{mN\left(  \overline{p}-1\right)  }+\frac{1}{N}-\frac
{1}{\overline{p}}}\left(  \int_{0}^{t}G^{^{\ast}}(\tau)d\tau\right)
^{\frac{1}{\overline{p}}}\right]  ^{\sigma}\frac{ds}{s}\text{ }\\
&  \leq c\overset{N}{\underset{i=1}{%
{\displaystyle\sum}
}}\int_{0}^{\left\vert \Omega\right\vert }\left[  s^{\left(  \frac
{N-m\overline{p}}{mN\left(  \overline{p}-1\right)  }+\frac{1}{N}-\frac
{1}{\overline{p}}\right)  \overline{p}}\int_{0}^{s}\left(  g_{i}^{\ast}%
(\tau)\right)  ^{^{p_{i}^{\prime}}}d\tau\right]  ^{\frac{\sigma}{\overline{p}%
}}\frac{ds}{s}\\
&  \leq c\overset{N}{\underset{i=1}{%
{\displaystyle\sum}
}}\left\Vert g_{i}\right\Vert _{L^{^{\frac{mN\left(  \overline{p}-1\right)
}{N-m}\frac{p_{i}^{\prime}}{\overline{p}},\frac{\sigma p_{i}^{\prime}%
}{\overline{p}}}}(\Omega)}^{\frac{\overline{p}}{\sigma p_{i}^{\prime}}}.
\end{align*}
The other cases can be obtained using Theorem \ref{th confronto} and standard
weighted Hardy-type inequalities when $\sigma\geq1$ (see \textit{e.g.}
\cite{KP}), and some appropriate Hardy-type inequalities for monotone
functions when $0<\sigma<1$ (see \textit{e.g.} \cite{Goldmann2000}).
\end{proof}

\begin{remark}
\bigskip\emph{When }$g_{i}\equiv0$\emph{ for }$i=1,..,N$\emph{ Proposition
\ref{soluzioni deboli} gives the same results as \cite{castro} in the class of
Lebesgue spaces and the same regularity results of \cite{cianchi anisotropo}
in the class of Lorentz-Zygmund spaces.}
\end{remark}

Finally we are interested on the existence of distributional solutions to
problem (\ref{problema}) when the data $f\not \in L^{\left(  \overline
{p}^{\ast}\right)  ^{\prime},\overline{p}^{\prime}}(\Omega)$. \ For simplicity
we consider $g_{i}=0$ for $i=1,\ldots, N.$

\begin{proposition}
\label{soluzioni distribuzionali}Suppose (\ref{ellitticita})-(\ref{monotonia}%
), (\ref{p barrato}), (\ref{cond pN}) hold and $\frac{p_{i}}{\overline
{p}^{\prime}}>\frac{N}{N-1}$ for $i=1,\ldots,N.$

i) Let $f\in L^{m,r}(\Omega).$ If $1<m<\left(  \overline{p}^{\ast}\right)
^{\prime}$ and $0<r\leq m^{\ast}$ or $m=\left(  \overline{p}^{\ast}\right)
^{\prime}$ and $\overline{p}^{\prime}<r\leq m^{\ast},$ then there exists at
least one distributional solution $u$ to problem (\ref{problema}) such that
$\frac{\partial u}{\partial x_{i}}\in L^{q_{i}}(\Omega)$ with $q_{i}%
=\frac{p_{i}m^{\ast}}{\overline{p}^{\prime}}$ for $i=1,\ldots, N.$

ii) Let $f\in L^{1,r}\left(  \log L\right)  ^{\alpha}(\Omega).$ If either
$0<r\leq1^{\ast}$ and $\alpha\geq1$ or $r>1^{\ast}$ and $\frac{1}{r}%
+\alpha>\frac{1}{1^{\ast}}+1$ then there exists at least one distributional
solution $u$ to problem (\ref{problema}) such that $\frac{\partial u}{\partial
x_{i}}\in L^{q_{i}}(\Omega)$ with $q_{i}=\frac{p_{i}1^{\ast}}{\overline
{p}^{\prime}}$ \ for $\ i=1,\ldots,N.$
\end{proposition}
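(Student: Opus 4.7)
The plan is to proceed via approximation, using truncated data to reduce to the weak solution setting covered by Proposition \ref{soluzioni deboli}, and then pass to the limit using the a priori estimates that come from Corollary \ref{corollario}. First I would define the truncations $f_{n}=T_{n}(f)=\max\{-n,\min\{n,f\}\}$, so that $f_{n}\in L^{\infty}(\Omega)\subset L^{(\overline{p}^{\ast})^{\prime},\overline{p}^{\prime}}(\Omega)$. By the classical Leray--Lions theory together with (\ref{ellitticita})--(\ref{monotonia}), there exists a weak solution $u_{n}\in W_{0}^{1,\overrightarrow{p}}(\Omega)$ of the problem obtained from (\ref{problema}) by replacing $f$ with $f_{n}$ (and with $g_{i}\equiv 0$).

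Next I would apply Corollary \ref{corollario} to each $u_{n}$, obtaining the pointwise bound $u_{n}^{\bigstar}\leq C\,v_{n}$ where $v_{n}$ is given by (\ref{soluzione_anisotropo}) with $f^{\bigstar}$ replaced by $f_{n}^{\bigstar}$ and $G\equiv 0$. Because $f_{n}^{\ast}\leq f^{\ast}$, the right-hand side is controlled uniformly in $n$ by an explicit one-dimensional integral depending only on the Lorentz (or Lorentz--Zygmund) norm of $f$; the associated rearrangement bound $u_{n}^{\ast}(s)\leq C\int_{s}^{|\Omega|}(t^{1/N}f^{\ast\ast}(t))^{1/(\overline{p}-1)}\,t^{-1/N^{\prime}}dt/N\omega_{N}^{1/N}\Lambda^{1/(\overline{p}-1)}$ controls $u_{n}$ in the appropriate rearrangement-invariant norm dictated by the summability of $f$.

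The heart of the proof is the uniform $L^{q_{i}}$ estimate for $\partial_{x_{i}}u_{n}$ with $q_{i}=p_{i}m^{\ast}/\overline{p}^{\prime}$ (respectively $q_{i}=p_{i}1^{\ast}/\overline{p}^{\prime}$ in case (ii)). I would obtain it by a Boccardo--Gallou\"et style truncation argument adapted to the anisotropic setting: testing the equation for $u_{n}$ with $T_{k}(u_{n})$ and using (\ref{ellitticita}) yields
\[
\sum_{i=1}^{N}\lambda_{i}\int_{\{|u_{n}|\leq k\}}|\partial_{x_{i}}u_{n}|^{p_{i}}\,dx\leq k\,\|f\|_{L^{1}(\Omega)}.
\]
Combining this level-set energy bound with the distributional estimate on $u_{n}^{\ast}$ coming from (\ref{confronto_anisotropo}) (which controls $\mu_{u_{n}}(k)$), a standard interpolation between the measure of superlevel sets and the integrals on sublevel sets gives the Marcinkiewicz/Lorentz bounds $\|\partial_{x_{i}}u_{n}\|_{L^{q_{i}}(\Omega)}\leq c$ uniformly in $n$. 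The hypothesis $p_{i}/\overline{p}^{\prime}>N/(N-1)$ is precisely what forces $q_{i}>1$ for every choice of $m\in[1,(\overline{p}^{\ast})^{\prime}]$ appearing in (i)--(ii), and simultaneously ensures that the growth bound (\ref{crescita a}) gives $a_{j}(x,u_{n},\nabla u_{n})\in L^{1}(\Omega)$ uniformly.

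Finally, the uniform gradient bounds allow extraction of a subsequence with $u_{n}\rightharpoonup u$ weakly in $W_{0}^{1,\vec{q}}(\Omega)$, strongly in $L^{1}(\Omega)$, and pointwise almost everywhere. To pass to the limit in the nonlinear term one cannot rely on weak convergence alone, so I would invoke the Boccardo--Murat strict-monotonicity argument based on (\ref{monotonia}): testing the difference of equations for $u_{n}$ and $u_{m}$ with $T_{k}(u_{n}-u_{m})$ and exploiting the a.e. convergence of $u_{n}$ together with the $L^{1}$ convergence of $f_{n}$, one proves $\nabla u_{n}\to \nabla u$ almost everywhere on $\Omega$; then Vitali's theorem and the growth condition (\ref{crescita a}) give $a(x,u_{n},\nabla u_{n})\to a(x,u,\nabla u)$ in $L^{1}(\Omega)$, so $u$ is the sought distributional solution. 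The main obstacle I anticipate is the passage from the pointwise comparison (\ref{confronto_anisotropo}) plus the level-set energy bound to sharp $L^{q_{i}}$ estimates on each partial derivative separately, since the anisotropy breaks the symmetry between the different $\partial_{x_{i}}u_{n}$ and one must combine the rearrangement information on $u_{n}^{\ast}$ with direction-by-direction Marcinkiewicz estimates, using the range of exponents guaranteed by $p_{i}/\overline{p}^{\prime}>N/(N-1)$.
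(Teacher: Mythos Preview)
Your approximation scheme and the limit passage via strict monotonicity are fine and match the paper's strategy. The genuine gap is in the derivation of the uniform $L^{q_i}$ gradient bounds for case (i) when $m>1$. The energy estimate you extract from testing with $T_k(u_n)$, namely
\[
\sum_{i}\lambda_i\int_{\{|u_n|\le k\}}|\partial_{x_i}u_n|^{p_i}\,dx\le k\,\|f\|_{L^1(\Omega)},
\]
uses only the $L^1$ norm of $f$. If you interpolate this against the Marcinkiewicz bound on $\mu_{u_n}(k)$ coming from (\ref{confronto_anisotropo}), the optimisation in $k$ produces the exponent $p_i\gamma/(1+\gamma)$ with $\gamma$ the Marcinkiewicz exponent of $u_n$. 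For $m=1$ this does coincide with $p_i 1^*/\overline{p}^{\prime}$, but for $m>1$ it is strictly smaller than the claimed $q_i=p_i m^*/\overline{p}^{\prime}$; already in the isotropic case $p_i=\overline{p}=p$ one computes $p\gamma/(1+\gamma)=pNm(p-1)/[N(1-m)+mp(N-1)]<Nm(p-1)/(N-m)$ whenever $m>1$. The sublevel-set energy bound simply does not see the extra integrability of $f$, and your own closing remark about the ``main obstacle'' is exactly this failure.

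The paper closes this gap by using the symmetrization machinery more fully rather than a Boccardo--Gallou\"et interpolation. From the proof of Theorem \ref{th confronto} (the analogue of (\ref{phi}) with $g\equiv 0$) one obtains the pointwise-in-$t$ differential inequality
\[
-\frac{d}{dt}\int_{\{|u_h|>t\}}\sum_{j}\Big|\frac{\partial u_h}{\partial x_j}\Big|^{p_j}dx
\le c\,(-\mu_{u_h}'(t))\left(\frac{\int_0^{\mu_{u_h}(t)}f_h^*(s)\,ds}{\mu_{u_h}(t)^{1/N'}}\right)^{\overline{p}^{\prime}}.
\]
Applying H\"older's inequality on each level set one bounds $-\frac{d}{dt}\int_{\{|u_h|>t\}}|\partial_{x_i}u_h|^{q_i}$ by the $q_i/p_i$ power of the right-hand side times $(-\mu_{u_h}')^{1-q_i/p_i}$; integrating in $t$ and invoking the Hardy inequalities of Proposition \ref{Dis Hardy} with the choice $q_i=p_i m^*/\overline{p}^{\prime}$ yields the sharp bound in terms of $\|f_h\|_{L^{m,m^*}}$ (respectively $\|f_h\|_{L^{1,1^*}(\log L)^1}$). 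The point is that this route carries the full information on $f^*$ through the computation, whereas your $T_k$ test function throws it away at the first step.
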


\begin{proof}
Using standard approximation method (see \emph{e.g.} \cite{Bo}), we proceed
considering a sequence of approximate problems
\begin{equation}
\left\{
\begin{array}
[c]{lll}%
Lu_{h}=f_{h} &  & \text{in }\Omega\\
u_{h}=0 &  & \text{on }\partial\Omega,
\end{array}
\right.  \label{prob app}%
\end{equation}
where $f_{h}$ are smooth enough in order to assure the existence of a weak
solution $u_{h}\in W_{0}^{1,\overrightarrow{p}}(\Omega)$ and $f_{h}\rightarrow
f$ in $L^{m,r}(\Omega)$ if $m>1$ or $f_{h}\rightarrow f$ in $L^{1,r}(\log
L)^{\alpha}(\Omega)$ if $m=1$ for $r$ and $\alpha$ as in the statement.

First we prove an estimate of the norm of $\frac{\partial u_{h}}{\partial
x_{i}}$ in terms of norm of data adapting the standard symmetrization method.
Arguing as in the proof of Theorem \ref{th confronto}\ we obtain the analogue
of (\ref{phi}), \textit{i.e.}%
\begin{equation}
-\frac{d}{dt}\int_{\left\vert u_{h}\right\vert >t}\overset{N}{\underset
{j=1}{\sum}}\left\vert \frac{\partial u_{h}}{\partial x_{j}}\right\vert
^{p_{j}}dx\leq c\left(  -\mu_{u_{h}}^{\prime}\left(  t\right)  \right)
\left(  \frac{\int_{0}^{\mu_{u_{h}}\left(  t\right)  }f_{h}^{\ast}\left(
s\right)  ds}{\left(  \mu_{u_{h}}\left(  t\right)  \right)  ^{1/N^{\prime}}%
}\right)  ^{\overline{p}^{\prime}}. \label{aa}%
\end{equation}
Here and in what follows $c$ is a constant that can be vary from line to line.

Fixed $i\in\left\{  1,\ldots,N\right\}  $ for $1\leq q_{i}<p_{i}$ by
H\"{o}lder inequality and by inequality $\left(  \ref{aa}\right)  ,$ it
follows%
\begin{align}
-\frac{d}{dt}\int_{\left\vert u_{h}\right\vert >t}\left\vert \frac{\partial
u_{h}}{\partial x_{i}}\right\vert ^{q_{i}}dx  &  \leq\left(  -\frac{d}{dt}%
\int_{\left\vert u_{h}\right\vert >t}\left\vert \frac{\partial u_{h}}{\partial
x_{i}}\right\vert ^{p_{i}}dx\right)  ^{\frac{q_{i}}{p_{i}}}\left(  -\mu
_{u_{h}}^{\prime}\left(  t\right)  \right)  ^{1-\frac{q_{i}}{p_{i}}}\text{
\ }\nonumber\\
&  \leq c\left(  \frac{\int_{0}^{\mu_{u_{h}}\left(  t\right)  }f_{h}^{\ast
}\left(  s\right)  ds}{\left(  \mu_{u_{h}}\left(  t\right)  \right)
^{1/N^{\prime}}}\right)  ^{^{\overline{p}^{\prime}}\frac{q_{i}}{p_{i}}}\left(
-\mu_{u_{h}}^{\prime}\left(  t\right)  \right)  . \label{22}%
\end{align}
Integrating (\ref{22}), we get
\begin{equation}
\left\Vert \frac{\partial u_{h}}{\partial x_{i}}\right\Vert _{L^{q_{i}}%
}^{q_{i}}\leq\left\{
\begin{array}
[c]{lll}%
c%
{\displaystyle\int_{0}^{\left\vert \Omega\right\vert }}
\left(  s^{-\frac{1}{N^{\prime}}+\frac{1}{m^{\ast}}}%
{\displaystyle\int_{0}^{s}}
f_{h}^{\ast}\left(  z\right)  dz\right)  ^{m^{\ast}}\dfrac{ds}{s}\leq
c\left\Vert f_{h}\right\Vert _{L^{m,m^{\ast}}(\Omega)}^{m^{\ast}} & \text{for}
& m\neq1\\
&  & \\
c%
{\displaystyle\int_{0}^{\left\vert \Omega\right\vert }}
\left(
{\displaystyle\int_{0}^{s}}
f_{h}^{\ast}\left(  z\right)  dz\right)  ^{1^{\ast}}\dfrac{ds}{s}\leq
c\left\Vert f_{h}\right\Vert _{L^{1,1^{\ast}}\left(  \log L\right)
^{1}(\Omega)}^{1^{\ast}} & \text{for} & m=1.
\end{array}
\right.  \text{\ \ } \label{estimate}%
\end{equation}
Inequality (\ref{estimate}) follows using (\ref{Hardy1}), some suitable Hardy
inequalities and putting $q_{i}=\frac{p_{i}m^{\ast}}{\overline{p}^{\prime}}$.
This means that $\left\Vert \frac{\partial u_{h}}{\partial x_{i}}\right\Vert
_{L^{q_{i}}}^{q_{i}}$ is uniformly bounded in $L^{q_{i}}\left(  \Omega\right)
$ with $q_{i}>1$ (that implies $\frac{p_{i}}{\overline{p}^{\prime}}>\frac
{N}{N-1}$). Then there exist a function $u$ and some subsequence, which we
still denote by $u_{h}$, such that $u_{h}\rightarrow u$ strongly in
$L^{\overline{p}}(\Omega)$. Moreover we have (see \cite{castro} for more
details)
\begin{equation}
\partial_{i}u_{h}\rightarrow\partial_{i}u\qquad\hbox{strongly in}\;\;L^{r_{i}%
}(\Omega)\;\text{for }r_{i}<q_{i}. \label{castro}%
\end{equation}
Now using standard argument it is possible to pass to the limit in
(\ref{prob app}).
\end{proof}

\begin{remark}
\emph{The proof of Proposition \ref{soluzioni distribuzionali} guarantees the
existence of a Solution Obtaining as Limit of Approximations (see
\textit{e.g.} \cite{DA} for definition) that is also distributional solution.
Indeed, this type of solution is the limit of a sequence of bounded weak
solutions to the approximated problems (\ref{prob app}), whose data are
regular enough and approach the data of problem (\ref{problema}) in some
sense. \newline As well known it is possible to consider other equivalent
notions of solutions as the entropy solution and the renormalized solution
introduced in \cite{BBGGPV} and \cite{LM} respectively (see also the
bibliography starting with them).}
\end{remark}

\subsection{Example 2}

Let us consider%
\[
\Phi\left(  \xi\right)  =\underset{i=1}{\overset{N}{%
{\displaystyle\sum}
}}\left\vert \xi_{i}\right\vert ^{p_{i}}(\log\left(  c+\left\vert \xi
_{i}\right\vert \right)  )^{\alpha_{i}}\text{ \ }%
\]
for $\xi\in\mathbb{R}^{N},$ where either $p_{i}>1$ and $\alpha_{i}\in%
\mathbb{R}
$ or $p_{i}=1$ and $\alpha_{i}\geq0$ and constant $c$ is large enough for
$\left\vert s\right\vert ^{p_{i}}\log^{\alpha_{i}}\left(  c+\left\vert
s\right\vert \right)  $ to be convex for $i=1,\ldots,N$. In order to assure
that $\Phi_{\blacklozenge\bullet}$ is a Young function we have to leave out
the case $p_{i}=1$ and $\alpha_{i}=0$ for $i=1,\ldots,N$.

\noindent If we choose
\[
A(s)\approx|s|^{\sigma}\left(  \log(c+|s|)\right)  ^{\gamma}%
\]
near infinity, where either $\sigma>1$ and $\gamma\in\mathbb{R}$, or
$\sigma=1$ and $\gamma>0$, conclusions of Proposition \ref{prop_3.7} hold
with
\[
B(s)\approx\left\{
\begin{array}
[c]{lll}%
|s|^{\sigma}\left(  \log(c+|s|)\right)  ^{\gamma} & \text{if} & \sigma<N\\
|s|^{N}\left(  \log(c+|s|)\right)  ^{\gamma-N} & \text{if} & \sigma=N\text{
and }\gamma<N-1\\
|s|^{N}\left(  \log(c+|s|)\right)  ^{-1}\log(c+\log(c+|s|))^{-N} & \text{if} &
\sigma=N\text{ and }\gamma=N-1
\end{array}
\right.
\]

\noindent near infinity, and hence
\[
X_{A,N}(\Omega)=\left\{
\begin{array}
[c]{lll}%
L^{\frac{\sigma N}{N-\sigma},\sigma}\left(  \log\,L\right)  ^{\frac{\gamma
}{\sigma}}(\Omega) & \text{if} & \sigma<N\\
L^{\infty,N}\left(  \log L\right)  ^{\frac{\gamma}{N}-1}(\Omega) & \text{if} &
\sigma=N\text{ and }\gamma<N-1\\
L^{\infty,N}\left(  \log L\right)  ^{-\frac{1}{N}}(\log\,\log\,L)^{-1}%
(\Omega) & \text{if} & \sigma=N\text{ and }\gamma=N-1
\end{array}
\right.
\]
up to equivalent norms.

\noindent Here, $L^{\infty,N}\left(  \log\,L\right)  ^{-\frac{1}{N}}%
(\log\,\log\,L)^{-1}(\Omega)$ denotes the generalized Lorentz-Zygmund space
equipped with the norm
\[
\Vert h\Vert_{L^{\infty,N}\left(  \log\,L\right)  ^{-\frac{1}{N}}(\log
\log\,L)^{-1}(\Omega)}=\left\Vert s^{-\frac{1}{N}}\left(  1+\log\left(
|\Omega|/s\right)  \right)  ^{-\frac{1}{N}}\left(  1+\log\left(  1+\log\left(
|\Omega|/s\right)  \right)  \right)  ^{-1}h^{\ast}(s)\right\Vert
_{L^{N}(0,|\Omega|)}.
\]

\section{Appendix}

\bigskip We recall some Hardy inequalities with fixed weights. For more
details and other Hardy-types inequalities we refer to \cite{Goldmann2000} and
\cite{KP} (see \cite{D-F} too).

\begin{proposition}
\label{Dis Hardy}Let $\psi$ be a nonnegative measurable function on
$(0,+\infty).$ Suppose $0<r,q<+\infty.$ \vspace{-1.0cm} \begin{item}
\item[(i)]
If  $\psi$ belongs to the cone of monotone  function, then
\begin{equation}
\left(  \!\int_{0}^{+\infty}\!\left(  t^{-r}\!\int_{0}^{t}
\psi(s)\text{ }ds\!\right)  ^{q}\text{ }\frac{dt}{t}\!\right)  ^{\frac{1}{q}%
}\!\!\leq\!c\text{ }\!\left(  \!\int_{0}^{+\infty}\!\left(  t^{1-r}%
\psi(t)\right)  ^{q}\frac{dt}{t}\!\right)  ^{\frac{1}{q}} \label{Hardy1}%
\end{equation}
and
\begin{equation}
\left(  \!\int_{0}^{+\infty}\!\left(  t^{r}\!\int_{t}^{+\infty}\!
\psi(s)\text{ }ds\!\right)  ^{q}\!\text{ }\frac{dt}{t}\!\right)  ^{\frac{1}%
{q}}\!\!\leq\!c\text{ }\!\left(  \!\int_{0}^{+\infty}\!\left(
t^{1+r}\psi(t)\right)  ^{q}\frac{dt}{t}\!\right)  ^{\frac{1}{q}} \label{Hardy2}%
\end{equation}
hold .
\item[(ii)]
If $1\leq q<+\infty$, then inequalities (\ref{Hardy1}) and
(\ref{Hardy2}) hold without any assumptions on $\psi$.
\end{item}

\noindent In all cases, the constants $c$ are independent of $\psi$.
\end{proposition}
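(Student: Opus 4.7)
The plan is to treat (\ref{Hardy1}) and (\ref{Hardy2}) in parallel, since both rest on the same two ingredients: a scaling change of variable in the inner integral, and an $L^q$-type triangle inequality. The dichotomy between parts (i) and (ii) reflects the fact that for $q\geq 1$ the scaling argument works for arbitrary $\psi$ via Minkowski's integral inequality, whereas for $0<q<1$ the continuous Minkowski step must be replaced by a dyadic decomposition whose validity requires monotonicity.

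For part (ii), I focus on (\ref{Hardy1}); the proof of (\ref{Hardy2}) is completely analogous with the substitution $s=wt$, $w\geq 1$ replacing $s=ut$, $u\in(0,1)$. I write $\int_0^t\psi(s)\,ds=t\int_0^1\psi(ut)\,du$ and apply Minkowski's integral inequality in the $L^q(t^{-rq}\,dt/t)$-norm (justified by $q\geq 1$) to pull the $du$-integral outside. A further substitution $\tau=ut$ makes the resulting $t$-integral independent of $u$ up to the factor $u^{r-1}$, which is integrable on $(0,1)$ precisely because $r>0$. The constant comes out as $c=1/r$.

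For part (i), the non-decreasing case of (\ref{Hardy1}) follows at once from the trivial estimate $\int_0^t\psi(s)\,ds\leq t\psi(t)$, with constant $c=1$; and the corresponding case for (\ref{Hardy2}) is handled by noting that for the right-hand side to be finite, monotonicity forces $\psi$ to be eventually zero, reducing the analysis to a compact interval. The substantive case is non-increasing $\psi$, for which I use the dyadic decomposition $(0,t)=\bigcup_{k\geq 0}[t/2^{k+1},t/2^k]$; monotonicity gives $\int_{t/2^{k+1}}^{t/2^k}\psi(s)\,ds\leq (t/2^{k+1})\psi(t/2^{k+1})$, whence
\[
t^{-r}\!\int_0^t\psi(s)\,ds\ \leq\ \sum_{k\geq 0}2^{-(k+1)r}\bigl(t/2^{k+1}\bigr)^{1-r}\psi\bigl(t/2^{k+1}\bigr).
\]
Raising to the power $q$ and using $(\sum a_k)^q\leq\sum a_k^q$ when $0<q\leq 1$ (or Minkowski when $q\geq 1$), integrating against $dt/t$, and applying the scale-invariant substitution $\tau=t/2^{k+1}$ in each summand, the right-hand side factorizes into the convergent geometric series $\sum_k 2^{-(k+1)rq}$ times the target integral on the right-hand side of (\ref{Hardy1}). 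The analogous outer dyadic decomposition $(t,+\infty)=\bigcup_{k\geq 0}[2^kt,2^{k+1}t]$ delivers (\ref{Hardy2}) in the non-increasing case.

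The main technical subtlety is the interplay of the parameter $r$ with the geometric series in the dyadic argument: one needs $r>0$ so that $\sum_k 2^{-krq}$ converges. The monotonicity assumption in part (i) is essential because, for $0<q<1$, the continuous Minkowski inequality fails, and one must replace the inner integral by a point evaluation at a dyadic scale --- a step that is legitimate only under monotonicity. Everything else is bookkeeping of scale-invariance, and the constants produced depend only on $r$ and $q$, uniformly in $\psi$.
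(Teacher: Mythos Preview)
The paper does not give a proof of this proposition; it is stated in the Appendix as a known fact, with references to \cite{Goldmann2000} and \cite{KP} (and \cite{D-F}). So there is no ``paper's own proof'' to compare against. Your argument is therefore an addition rather than a reproduction, and it is essentially correct.

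A few minor comments. Your treatment of part (ii) via the scaling $s=ut$ and Minkowski's integral inequality is clean and yields the sharp constant $c=1/r$ in both (\ref{Hardy1}) and (\ref{Hardy2}). In part (i), the dyadic decomposition for non-increasing $\psi$ is carried out correctly; the key point---that the geometric factor $\sum_{k}2^{-krq}$ converges because $r,q>0$---is exactly what drives the argument, and the scale-invariance of $dt/t$ makes the substitution $\tau=t/2^{k+1}$ painless. The one place where your wording is slightly loose is the non-decreasing case of (\ref{Hardy2}): if $\psi$ is non-decreasing and the right-hand side is finite, then $\psi$ is not merely ``eventually zero'' but identically zero (any $t_0$ with $\psi(t_0)>0$ forces $\int_{t_0}^{\infty}t^{(1+r)q-1}\,dt=\infty$), so the inequality is vacuous. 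This does not affect the validity of the conclusion.

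In short, your proof is correct and self-contained, supplying what the paper leaves to the literature; the approach via scaling and dyadic decomposition is the standard one and matches in spirit what one finds in the cited references.
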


\paragraph*{Acknowledgements}

This work has been partially supported by GNAMPA of INdAM.

\end{document}